\newcommand{\R}{\mathbb{R}}
\newcommand{\N}{\mathbb{N}}
\newcommand{\hd}{\dim_{\textup{H}}}
\newcommand{\bd}{\dim_{\textup{B}}}
\newcommand{\ubd}{\overline{\dim}_{\textup{B}}}
\newcommand{\lbd}{\underline{\dim}_{\textup{B}}}
\newcommand{\uid}{\overline{\dim}_{\,\theta}}
\newcommand{\lid}{\underline{\dim}_{\,\theta}}
\newcommand{\lidone}{\underline{\dim}_{\,1}}
\newcommand{\be}{\begin{equation}}
\newcommand{\ee}{\end{equation}}
\DeclareMathOperator*\lowlim{\liminf}
\DeclareMathOperator*\uplim{\limsup}
\DeclarePairedDelimiter{\ceil}{\lceil}{\rceil}
\newtheorem{theorem}{Theorem}[section]
\newtheorem{lemma}[theorem]{Lemma}
\newtheorem{cor}[theorem]{Corollary}
\newtheorem{question}[theorem]{Question}
\theoremstyle{definition}
\theoremstyle{remark}
\numberwithin{equation}{section}
\begin{document}

\title{Dimensions of fractional Brownian Images}
\author{Stuart A. Burrell}
\address{Stuart A. Burrell, School of Mathematics and Statistics, University of St Andrews, St Andrews, KY16 9SS, United Kingdom.}
\email{sb235@st-andrews.ac.uk}

\subjclass[2010]{primary: 28A80, 60G22; secondary: 60G15.}
\keywords{Intermediate dimensions, box dimension, Hausdorff dimension, fractional Brownian motion, capacity, exceptional directions}
\date{\today}
\dedicatory{}

\begin{abstract}
This paper concerns the intermediate dimensions, a spectrum of dimensions that interpolate between the Hausdorff and box dimensions. Potential theoretic methods are used to produce dimension bounds for images of sets under H\"older maps and certain stochastic processes. We apply this to compute the almost-sure value of the dimension of Borel sets under index-$\alpha$ fractional Brownian motion in terms of dimension profiles defined using capacities. As a corollary, this establishes continuity of the profiles for Borel sets and allows us to obtain an explicit condition showing how the Hausdorff dimension of a set may influence the typical box dimension of H\"older images such as projections. The methods used propose a general strategy for related problems; dimensional information about a set may be learned from analysing particular fractional Brownian images of that set. To conclude, we obtain bounds on the Hausdorff dimension of exceptional sets, with respect to intermediate dimensions, in the setting of projections. \\
\end{abstract}

\maketitle

\section{Introduction}

The growing literature on dimension spectra is beginning to provide a unifying framework for the many notions of dimension that arise throughout the field of fractal geometry. Suppose you are given two notions of dimension, $\dim_X$ and $\dim_Y$, with $\dim_X E \leq \dim_Y E$ for all $E \in \R^n$. Dimension spectra aim to provide a continuum of dimensions, perhaps denoted $\dim_\theta$ and parametrised by $\theta \in [0, 1]$, such that $\dim_0 = \dim_X$ and $\dim_1 = \dim_Y$. This is of interest for a number of reasons. For example, $\dim_X$ and $\dim_Y$ may behave differently for certain classes of sets, since each may be sensitive to different geometric properties. Thus, it may be valuable to understand for what $\theta$ this transition in behaviour occurs, potentially deepening our understanding of $\dim_X$, $\dim_Y$, and the family sets in question. Despite their extremely recent introduction, they have already seen surprising applications, for example, \cite[Corollary 6.4]{bufafr:2019} and \cite{spirals:2019}.\\

There are currently two main dimension spectra of interest. For $E \subset \R^n$, recall
$$
\hd E \leq \bd E \leq \dim_A E
$$
where, from left to right, these denote Hausdorff dimension, box dimension and Assouad dimension. Fraser and Yu introduced the Assouad spectrum to form a partial interpolation between the upper box dimension and the Assouad dimension, see \cite{assouad-spec}. The main focus of this paper will be the intermediate dimensions of Fraser, Kempton and Falconer \cite{fafrke:2018} that interpolate between the popular Hausdorff and box dimensions. These will be formally introduced in Section \ref{setting}.\\

In developing this new theory, it is natural to re-examine classical theorems and see how well they adapt to this more general setting. Work along these lines has already begun, with \cite{spirals:2019,frtr:2018,assouad-spec} investigating the Assouad spectrum and \cite{bufafr:2019} establishing a Marstrand-type projection theorem for the intermediate dimensions. This paper generalises \cite{bufafr:2019} beyond projections to general H\"older images and images of sets under stochastic processes, such as index-$\alpha$ fractional Brownian motion. Recall that a map $f: E \rightarrow \R^m$ is $\alpha$-H\"older on $E \subset \R^n$ if there exists $c > 0$ and $0 < \alpha \leq 1$ such that
$$
|f(x) - f(y)| \leq c|x - y|^\alpha
$$
for all $x, y \in E$. This scheme of work continues a tradition of Xiao \cite{shiehxiao,xiao}, who used dimension profiles almost immediately after their introduction in 1997 \cite{faho:1997} to consider the packing dimensions of sets under fractional Brownian motions. Unexpectedly, obtaining bounds on the dimension of fractional Brownian images allowed us to quickly establish continuity of the profiles for arbitrary Borel sets. Moreover, this led to an explicit condition showing how the Hausdorff dimension of a set may influence the typical box dimension of H\"older images such as projections. Both of these applications followed from a method which suggests a more general philosophy; dimensional information in a general setting can be obtained by transporting information back from a well-chosen fractional Brownian image.\\

Finally, we return to the setting of projections where our main results may be applied to bound the Hausdorff dimension of the exceptional sets, see Theorem \ref{excep}. That is, the dimension of the family of sets whose projection has unusually small dimension. There is a long history of interest in this topic, see \cite{fal:1982,mattila1975b,kaufmanproj}. Throughout, we adopt a capacity theoretic approach to intermediate dimension profiles, as in \cite{bufafr:2019}, while adapting this strategy to meld it with ideas from \cite{fal:2018}. 

\section{Setting and Preliminaries}\label{setting}

In this section we will define the necessary tools and concepts used throughout.  This section is intentionally brief, and the interested reader is directed to \cite{bufafr:2019} for a more elaborate discussion of the material and \cite{Falconer} for a gentle introduction to dimension theory. We begin with the precise formulation of the intermediate dimensions. Throughout, all sets are assumed to be non-empty, bounded and Borel.\\

For $E \subset \R^n$ and $0 < \theta \leq 1$, the  {\em lower intermediate dimension} of $E$ may be defined as
\begin{align*}
\lid E =  \inf \big\{& s\geq 0  :  \mbox{ \rm for all $\epsilon >0$ and all $r_0>0$, there exists }  \nonumber\\
&\mbox{ $0<r\leq r_0$ and a cover $ \{U_i\} $ of $E$  such that} \\
 & \mbox{ $r^{1/\theta} \leq  |U_i| \leq r $ and 
 $\sum |U_i|^s \leq \epsilon$}  \big\}\nonumber
\end{align*}
and the corresponding {\em upper intermediate dimension} by
\begin{align*}
\uid E =  \inf \big\{& s\geq 0  :  \mbox{ \rm for all $\epsilon >0$, there exists $r_0>0$ such that} \nonumber\\
& \mbox{for all $0<r\leq r_0$, there is a cover $ \{U_i\} $ of $E$} \\
&\mbox{such that $r^{1/\theta} \leq  |U_i| \leq r$ and 
$\sum |U_i|^s \leq \epsilon$}  \big\},\nonumber
\end{align*}
where $|U|$ denotes the diameter of a set $U \subset \R^n$. If $\theta = 0$, then we recover the Hausdorff dimension in both cases, since the covering sets may have arbitrarily small diameter. Moreover, if $\theta=1$, then we recover the lower and upper box-counting dimensions, respectively, since sets within admissible covers are forced to have equal diameter. While the above makes the interpolation intuitive, for technical reasons it is practical to use an equivalent formulation. First, for bounded and non-empty $E \subset \R^n$, $\theta  \in (0, 1]$ and $s\in [0,n]$, define
\begin{align*}
S_{r, \theta}^s(E) := \inf \Big\{& \sum_i |U_i|^s :\mbox{ \rm $\{U_i\}_i$\textnormal{ is a cover of} $E$ \textnormal{ such that }}\nonumber\\ 
& \mbox{ $r \leq |U_i| \leq r^\theta$\,\,\textnormal{ for all } $i$} \Big\}.
\end{align*}
It is proven in \cite[Section 2]{bufafr:2019} that
$$
\lid E =  \bigg(\textnormal{ the unique } s\in [0,n] \textnormal{ such that  } \liminf\limits_{r \rightarrow 0} \frac{\log S_{r, \theta}^s(E)}{-\log r} =0\bigg)
$$
and 
$$
\uid E =  \bigg(\textnormal{ the unique } s\in [0,n] \textnormal{ such that  } \limsup\limits_{r \rightarrow 0} \frac{\log S_{r, \theta}^s(E)}{-\log r} =0\bigg).
$$

The first step of a capacity theoretic approach is to define an appropriate kernel for the setting. For each collection of parameters $\theta \in (0, 1]$, $t > 0$, $0 \leq s \leq t$ and $0<r <1$, define $\phi_{r, \theta}^{s, t} : \R^n \rightarrow \R$ by
\be\label{ker}
{\phi}_{r, \theta}^{s, t}(x) = \begin{cases} 
      1 & 0\leq |x| < r \\
      \big(\frac{r}{|x|}\big)^s & r\leq |x| < r^\theta   \\
      \frac{r^{\theta(t-s) + s}}{|x|^t}\ & r^\theta \leq |x|
   \end{cases}.
\ee
In addition, for Lemma \ref{hatlem} and Theorem \ref{ctsfam}, we will require a set of modified kernels $\widetilde{\phi}_{r, \theta}^s : \R^m \rightarrow \R$ ($m \in \N$) given by
\be\label{modker}
\widetilde{\phi}_{r, \theta}^s(x)=
\begin{cases}
1 & |x| < r\\ 
\big(\frac{r}{|x|}\big)^s & r \leq |x| \leq r^\theta\\
0 & r^\theta < |x|
\end{cases} ,
\ee
where $0<r<1, \theta\in (0,1]$ and $0<s\leq m$. Using the first of these kernels, we define the \emph{capacity} of a compact set $E \subset \R^n$ to be
\begin{equation*}
{C_{r, \theta}^{s, t}(E)} = \left(\inf\limits_{\mu \in \mathcal{M}(E)} \int\int \phi_{r, \theta}^{s, t}(x - y) \,d\mu(x)d\mu(y)\right)^{-1},
\end{equation*}
where $\mathcal{M}(E)$ denotes the set of probability measures supported on $E$. For a set that may be bounded, but not closed, the capacity is simply defined to be that of its closure. \\

A measure that obtains the infimum in the definition of capacity is known as an \emph{equilibrium measure}. The existence of such measures and the relationship between the minimal energy and the corresponding potentials is standard in classical potential theory. We state this in a convenient form; it is easily proved for continuous kernels, see, for example, \cite[Lemma 2.1]{fal:2019}. 

\begin{lemma}\label{attaincap}
Let $E\subset \R^n$ be compact, $t > 0$, $0 \leq s \leq t$, $\theta \in (0, 1]$  and $0<r<1$. Then there exists an equilibrium measure $\mu \in \mathcal{M}(E)$ such that 
$$\int\int \phi_{r, \theta}^{s, t}(x - y) d\mu(x)d\mu(y) = \frac{1}{C_{r, \theta}^{s, t}(E)} =: \beta.$$
Moreover, 
$$\int \phi_{r, \theta}^{s, t}(x - y) d\mu(y) \geq \beta$$
for all $x \in E$, with equality for $\mu$-almost all $x \in E$.
\end{lemma}

In \cite{bufafr:2019} a close relationship between the capacity of a set $E$ and $S_{r, \theta}^s(E)$ is established, see \cite[Proposition 4.2]{bufafr:2019}. This connection allowed \emph{intermediate dimension profiles} to be introduced, which in turn are central to a Marstrand-type projection theorem \cite[Theorem 5.1]{bufafr:2019}. For $t > 0$, we define the \emph{lower intermediate dimension profile} of $E \subset \R^n$ as
\be\label{lidp}
\lid^t E =  \bigg(\textnormal{ the unique } s\in [0,t] \textnormal{ such that  } \lowlim\limits_{r \rightarrow 0}\frac{\log C_{r, \theta}^{s, t}(E)}{-\log r} = s\bigg)
\ee
and the \emph{upper intermediate dimension profile} as
\be\label{ludp}
\uid^t E =  \bigg(\textnormal{ the unique } s\in [0,t] \textnormal{ such that  } \uplim\limits_{r \rightarrow 0}\frac{\log C_{r, \theta}^{s, t}(E)}{-\log r} = s\bigg).
\ee
In \cite{bufafr:2019}, only integer $t \leq n$ was required, as this corresponded to the topological dimension of the subspace being projected onto. However, as we shall see, it is necessary and possible to consider dimension profiles for non-integer and arbitrarily large $t$ in the more general setting of Theorems \ref{holderthm}, \ref{ctsfam} and \ref{frcbrown}. In fact, to ensure that the above profiles exist, we require the following lemma, which allows \cite[Lemma 3.2]{bufafr:2019} to be easily extended for this greater range of $t$.

\begin{lemma}\label{existlem}
For bounded $E \subset \R^n$ and all $t > 0$,
$$
\lowlim\limits_{r \rightarrow 0}\frac{\log C_{r, \theta}^{t, t}(E)}{-\log r} - t \leq \uplim\limits_{r \rightarrow 0}\frac{\log C_{r, \theta}^{t, t}(E)}{-\log r} - t \leq 0.
$$
In particular, there exists a unique $s \in [0, t]$ such that
$$
\lowlim\limits_{r \rightarrow 0}\frac{\log C_{r, \theta}^{s, t}(E)}{-\log r} = s
$$
and unique $s' \in [0, t]$ such that
$$
\uplim\limits_{r \rightarrow 0}\frac{\log C_{r, \theta}^{s', t}(E)}{-\log r} = s'.
$$
\end{lemma}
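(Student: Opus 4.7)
The leftmost inequality is the trivial $\liminf \leq \limsup$; all the content is in showing $\limsup_{r \to 0} \log C_{r,\theta}^{t,t}(E)/(-\log r) \leq t$. My plan here is to exploit that when $s = m = t$ the kernel in (\ref{ker}) collapses to $\phi_{r,\theta}^{t,t}(x) = \min\{1,(r/|x|)^t\}$, with the dependence on $\theta$ disappearing entirely. For small $r$ and any $x, y \in E$, this is bounded below by $(r/\diam E)^t$, so every probability measure on $E$ produces a double integral of at least $(r/\diam E)^t$, forcing $C_{r,\theta}^{t,t}(E) \leq (\diam E/r)^t$. Taking logs and dividing by $-\log r$ then leaves $t + o(1)$ as $r \to 0$, which is the desired bound.

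For the existence and uniqueness claim I would apply the intermediate value theorem to $s \mapsto g(s) - s$ on $[0,t]$, where $g(s) := \liminf_{r \to 0} \log C_{r,\theta}^{s,t}(E)/(-\log r)$. The boundary values $g(0) \geq 0$ and $g(t) \leq t$ are both immediate: the first from the observation that $\phi_{r,\theta}^{0,t} \leq 1$ pointwise forces $C_{r,\theta}^{0,t}(E) \geq 1$, and the second from the preceding paragraph. To secure continuity and strict monotonicity of $s \mapsto g(s) - s$, I plan to carry out a region-by-region comparison on (\ref{ker}): for $0 \leq s \leq s' \leq t$ this yields the pointwise estimate
$$\phi_{r,\theta}^{s',t}(x) \; \leq \; \phi_{r,\theta}^{s,t}(x) \; \leq \; r^{-(1-\theta)(s'-s)}\,\phi_{r,\theta}^{s',t}(x),$$
which integrates against the equilibrium measures to give $r^{(1-\theta)(s'-s)} C_{r,\theta}^{s',t}(E) \leq C_{r,\theta}^{s,t}(E) \leq C_{r,\theta}^{s',t}(E)$. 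Taking logs, dividing by $-\log r$, and passing to the liminf then reads $0 \leq g(s') - g(s) \leq (1-\theta)(s'-s)$, so $s \mapsto g(s) - s$ is continuous and (since its discrete rate is at most $(1-\theta) - 1 = -\theta < 0$) strictly decreasing, and IVT supplies the unique root. The identical argument with $\limsup$ in place of $\liminf$ handles the upper profile.

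The main obstacle, and the real reason the lemma is needed, is the first paragraph: the bound $g(t) \leq t$ is what makes the dimension profiles in (\ref{lidp}) and (\ref{ludp}) well-defined for non-integer parameters, and it relies on the fortunate simplification of the kernel at $s = m$. Without this collapse one would need a more delicate energy estimate tailored to fractional $t$; once it is available, the remaining continuity-and-monotonicity argument is an essentially direct adaptation of \cite[Lemma 3.2]{bufafr:2019} from integer $m$ to general $t \in (0,n]$.
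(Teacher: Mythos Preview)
Your proof is correct and follows essentially the same route as the paper's: the key capacity bound $C_{r,\theta}^{t,t}(E)\leq c r^{-t}$ via the collapse of the kernel at $s=m=t$ is identical, and for the existence/uniqueness part you simply spell out the pointwise kernel comparison that the paper obtains by citing \cite[Lemma 3.2]{bufafr:2019}. The only cosmetic difference is that you reprove that lemma (with the clean Lipschitz estimate $0\le g(s')-g(s)\le (1-\theta)(s'-s)$) rather than invoking it.
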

\begin{proof}
It suffices to show that
\begin{equation}\label{criterion}
C_{r, \theta}^{t, t}(E) \leq c r^{-t}
\end{equation}
for some fixed $c > 0$ depending only on $E$ and $t$. For $0 < r < 1$, let $\mu$ be the equilibrium measure associated with $\phi_{r, \theta}^{t, t}$. Since $E$ is bounded, there exists a constant $B > 1$ such that
$$
|x - y | \leq B
$$
for all $x, y \in E$. 
Directly from the definition,
\begin{align*}
\phi_{r, \theta}^{t, t}(x - y) &=
\begin{cases} 
      1 & 0\leq |x - y| < r \\
      \big(\frac{r}{|x - y|}\big)^t & r\leq |x - y|
   \end{cases} \\
&\geq B^{-t} r^t.
\end{align*}
for all $x, y \in E$. Hence, 
\begin{align*}
\int \int \phi_{r, \theta}^{t, t}(x- y) \,d\mu(x)d\mu(y) \geq B^{-t}r^t,
\end{align*}
from which (\ref{criterion}) follows. The final part of the lemma may then be deduced since
$$
\lowlim\limits_{r \rightarrow 0}\frac{\log C_{r, \theta}^{0, t}(E)}{-\log r} - 0 \geq 0,
$$
and
$
\lowlim\limits_{r \rightarrow 0}\frac{\log C_{r, \theta}^{s, t}(E)}{-\log r} - s
$
is continuous and strictly monotonically decreasing in $s$ by a trivial extension to \cite[Lemma 3.2]{bufafr:2019}. We may similarly argue for the upper limits.
\end{proof}

To conclude this section, we briefly recall the definition of index-$\alpha$ fractional Brownian motion ($0 < \alpha < 1$), which we denote $B_\alpha : \R^n \rightarrow \R^m$ for $m, n \in \N$. In particular, $B_{\alpha} = (B_{\alpha, 1}, \dots, B_{\alpha, m})$, where for each $B_{\alpha, i} : \R^n \rightarrow \R$:
\begin{enumerate}[i)]
\item $B_{\alpha, i}(0) = 0$;
\item $B_{\alpha, i}$ is continuous with probability $1$;
\item the increments $B_{\alpha, i}(x) - B_{\alpha, i}(y)$ are normally distributed with mean $0$ and variance $|x-y|^{2\alpha}$ for all $x, y \in \R^n$.
\end{enumerate}
Moreover, $B_{\alpha, i}$ and $B_{\alpha, j}$ are independent for all $i, j \in \{1,\dots, m\}$. It immediately follows that for Borel $A\subset \R$,
$$
\mathbb{P}(B_{\alpha, i}(x) - B_{\alpha, i}(y) \in A) = \frac{1}{\sqrt{2\pi}}\frac{1}{|x-y|^\alpha}\int\limits_{t \in A} \exp\left(\frac{-t^2}{2|x-y|^{2\alpha}}\right)\,dt.
$$
The reader may enjoy the classical text of Kahane \cite{kahane:book} for a more detailed account of index-$\alpha$ fractional Brownian motion and related stochastic processes.

\section{Statement and Discussion of Results}
In this section, we collect and discuss the main results of the paper, the proofs of which may be found in later sections. Our first result establishes an upper bound on the intermediate dimensions of H\"older images using dimension profiles. Recalling that the $m$-intermediate dimension profiles intuitively tell us about the typical size of a set from an $m$-dimensional viewpoint for $m \in \{1, \dots, n\}$, it is interesting to note how the H\"older exponent dictates which profile appears in the bound. This is in contrast to the setting of projections \cite{bufafr:2019}, where the profile appearing in the upper-bound is simply the topological dimension of the codomain.

\begin{theorem}\label{holderthm}
Let $E \subset \R^n$ be compact, $\theta \in (0, 1]$, $ m \in \N$ and $f : E \rightarrow \R^m$. If there exists $c > 0$ and $0 < \alpha \leq 1$ such that 
\begin{equation}\label{holderlabel}
|f(x) - f(y)| \leq c|x-y|^\alpha
\end{equation}
for all $x, y \in E$, then
$$
\lid f(E) \leq \frac{1}{\alpha}\lid^{m\alpha}E 
$$
and
$$
\uid f(E) \leq \frac{1}{\alpha}\uid^{m\alpha}E.
$$
\end{theorem}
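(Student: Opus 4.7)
The plan is to use a capacity-theoretic transport argument: push forward an equilibrium measure on $E$ through $f$ and compare the resulting kernel values via a Hölder substitution on the radius. Write $t = \lid^{m\alpha} E$; since $t \in [0, m\alpha]$ we have $s := t/\alpha \in [0, m]$, so $\phi_{r,\theta}^{t/\alpha, m}$ is a legitimate kernel on $\R^m$. By the analogue of \cite[Lemma 3.2]{bufafr:2019} established in Lemma~\ref{existlem}, the map $s \mapsto \liminf_{r\to 0}\log C_{r,\theta}^{s,m}(f(E))/(-\log r) - s$ is strictly decreasing and vanishes at $\lid f(E)$, so the target inequality $\lid f(E) \le t/\alpha$ reduces to showing
$$
\liminf_{r \to 0}\frac{\log C_{r,\theta}^{t/\alpha, m}(f(E))}{-\log r} \;\le\; \frac{t}{\alpha}.
$$

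Along a subsequence $r_k \to 0$ witnessing $\liminf \log C_{r_k,\theta}^{t, m\alpha}(E)/(-\log r_k) = t$, let $\mu_k \in \mathcal{M}(E)$ be an equilibrium measure for $C_{r_k,\theta}^{t, m\alpha}(E)$ supplied by \cite[Lemma 3.1]{bufafr:2019}, and take $\nu_k = f_\ast \mu_k \in \mathcal{M}(f(E))$. Since $\nu_k$ is admissible in the variational definition of $C_{r_k^\alpha,\theta}^{t/\alpha, m}(f(E))$, the game becomes to bound $\iint \phi_{r_k^\alpha,\theta}^{t/\alpha, m}(f(x)-f(y))\,d\mu_k d\mu_k$ from below by a constant multiple of $\iint \phi_{r_k,\theta}^{t, m\alpha}(x-y)\,d\mu_k d\mu_k$.

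The heart of the argument is a uniform pointwise comparison
$$
\phi_{r^\alpha,\theta}^{t/\alpha, m}\bigl(f(x)-f(y)\bigr) \;\ge\; K\, \phi_{r,\theta}^{t, m\alpha}(x-y) \qquad (x, y \in E)
$$
valid for all small enough $r$, with $K$ depending only on $c, \alpha, t, m$. This will come from a case analysis over the three regimes $|x-y| < r$, $r \le |x-y| < r^\theta$, $|x-y| \ge r^\theta$ for the kernel on $E$ and the corresponding regimes at radii $r^\alpha, r^{\alpha\theta}$ for the kernel on $f(E)$, using $|f(x)-f(y)| \le c|x-y|^\alpha$ to relate the two arguments. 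A useful coincidence is that the tail-regime exponent $t + \theta(m\alpha - t)$ of $r$ matches $\alpha(\theta(m - t/\alpha) + t/\alpha)$, making the case when both arguments are in their respective tails collapse to $|f(x)-f(y)|^m \le c^m |x-y|^{m\alpha}$. The Hölder bound also forces several mixed cases to be empty for small $r$ (for instance, $|x-y| < r$ cannot push $|f(x)-f(y)|$ above $r^{\alpha\theta}$ once $r$ is small). I expect the combinatorial bookkeeping across the remaining subcases to be the main obstacle, though each reduces to a single monomial inequality in $|x-y|/r^\theta$ or $|f(x)-f(y)|/r^\alpha$.

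Integrating the pointwise estimate against $\mu_k \otimes \mu_k$ gives $C_{r_k^\alpha,\theta}^{t/\alpha, m}(f(E)) \le K^{-1}\, C_{r_k,\theta}^{t, m\alpha}(E)$; taking logarithms, dividing by $-\log r_k^\alpha = -\alpha \log r_k$, and passing to the liminf yields $\lid f(E) \le t/\alpha$. For the upper bound, the same argument applies verbatim with $\limsup$ in place of $\liminf$, since the pointwise kernel comparison is uniform in $r$ and the subsequence can be chosen along the $\limsup$ for $\uid^{m\alpha} E$.
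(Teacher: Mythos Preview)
Your pointwise kernel comparison $\phi_{r^\alpha,\theta}^{t/\alpha,m}(f(x)-f(y)) \ge K\,\phi_{r,\theta}^{t,m\alpha}(x-y)$ is exactly right (the paper obtains it with $K=1$ via the one-line identity $\phi_{r,\theta}^{s,m}(z)=\min\{1,\,r^s/|z|^s,\,r^{\theta(m-s)+s}/|z|^m\}$, which avoids your case analysis). The gap is in the capacity step that follows.

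You push forward the equilibrium measure $\mu_k\in\mathcal M(E)$ to $\nu_k=f_*\mu_k$ and note that $\nu_k$ is admissible in the infimum defining $C_{r_k^\alpha,\theta}^{t/\alpha,m}(f(E))$. But admissibility only yields
\[
\frac{1}{C_{r_k^\alpha,\theta}^{t/\alpha,m}(f(E))}\;\le\;\iint \phi_{r_k^\alpha,\theta}^{t/\alpha,m}(f(x)-f(y))\,d\mu_k\,d\mu_k,
\]
while your kernel inequality gives a lower bound $\ge K/C_{r_k,\theta}^{t,m\alpha}(E)$ on the same quantity. Two lower bounds on the same energy do not combine; you cannot conclude $C_{f(E)}\le K^{-1}C_E$ from this. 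To bound $C_{f(E)}$ \emph{above} you must bound the energy of \emph{every} $\nu\in\mathcal M(f(E))$ from below, which would require pulling the equilibrium measure on $f(E)$ back to $E$ (a measurable-selection argument you have not supplied), not pushing the equilibrium measure on $E$ forward.

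The paper sidesteps this entirely. It uses the \emph{pointwise potential} property of the equilibrium measure, namely $\int \phi_{r^{1/\alpha},\theta}^{s\alpha,m\alpha}(x-y)\,d\mu(y)\ge 1/C_{r^{1/\alpha},\theta}^{s\alpha,m\alpha}(E)$ for \emph{all} $x\in E$ (from \cite[Lemma~3.1]{bufafr:2019}), transfers this via the kernel comparison to $\int \phi_{r,\theta}^{s,m}(z-w)\,d(f_*\mu)(w)\ge 1/C_E$ for all $z\in f(E)$, and then feeds this into Lemma~\ref{capacityub} to bound $S_{r,\theta}^s(f(E))$ directly. The conclusion is read off from $S_{r,\theta}^s$, never from the capacity of $f(E)$. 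If you want to salvage a pure capacity argument, you need the surjectivity of $\mu\mapsto f_*\mu$ onto $\mathcal M(f(E))$; otherwise, route through $S_{r,\theta}^s$ as the paper does.
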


For certain families of mappings, such as fractional Brownian motion, we are able to obtain almost-sure lower bounds for the dimension of the images in terms of profiles too. Let $(\Omega, \mathcal{F}, P)$ denote a probability space with each $\omega \in \Omega$ corresponding to a $\sigma(\{F \times B : F \in \mathcal{F}, B \in \mathcal{B}\})$-measurable function $f_\omega : \R^n \rightarrow \R^m$, where $\mathcal{B}$ denotes the Borel subsets of $\R^n$. In order for this problem to be tractable, some condition must be placed on the set of functions. Specifically, we need to assume a relationship between
\begin{equation}\label{keyasum1}
\int 1_{[0, r]}(|f_\omega(x) - f_\omega(y)|) dP(\omega) = P\left(\left\{\omega : |f_\omega(x) - f_\omega(y)| \leq r\right\}\right)
\end{equation}
and the kernels (\ref{ker}). This is analogous to Matilla's result \cite[Lemma 3.11]{mat:book}, which covers the special case where $f_\omega$ denote orthogonal projections and $\Omega = G(n, m)$, the Grassmannian of $m$ dimensional subspaces of $\R^n$. However, such a result does not hold in general and so must be included as a hypothesis. This allows us to prove the following lemma that is a critical component of the following proofs. Essentially, it says that the integral of the modified kernels (\ref{modker}) over the probability space is bounded above by the kernels (\ref{ker}). \\

\begin{lemma}\label{hatlem}
Let $E \subset \R^n$ be compact, $\theta \in (0, 1]$, $\gamma > 0$, $m \in \N$ and $ 0 \leq s < m$. If $\{f_\omega : E \rightarrow \R^m, \omega \in \Omega\}$ is a set of continuous $\sigma(\{F \times B : F \in \mathcal{F}, B \in \mathcal{B}\})$-measurable functions such that there exists $c > 0$ satisfying
\begin{equation}\label{matanalog}
 P\left(\left\{\omega : |f_\omega(x) - f_\omega(y)| \leq r\right\}\right)\leq c\phi_{r^\gamma, \theta}^{m/\gamma, m/\gamma}(x - y) 
\end{equation}
 for all $x, y \in E$ and $r > 0$, then there exists $C_{s, m} > 0$ such that
$$ \int \widetilde{\phi}_{r, \theta}^{s}(f_\omega(x) - f_\omega(y))dP(\omega) \leq C_{s, m}\phi_{r^\gamma, \theta}^{s/\gamma, m/\gamma}(x- y).$$
\end{lemma}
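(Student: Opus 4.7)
The plan is to represent the integral on the left by the layer-cake formula, identify the superlevel sets of $\wt{\phi}_{r,\theta}^s$, and then feed the hypothesis \eqref{matanalog} into the resulting one-parameter integral. Writing $T(\omega) = |f_\omega(x) - f_\omega(y)|$, and using that $\wt{\phi}_{r,\theta}^s$ takes values in $[0,1]$ and is non-increasing on $[0,r^\theta]$ with a jump to $0$ at $r^\theta$, the superlevel set $\{u \ge 0 : \wt{\phi}_{r,\theta}^s(u) > t\}$ is (up to a measure zero set) the interval $[0,\min(rt^{-1/s},r^\theta))$ for each $t\in(0,1)$. Hence
\[
\int \wt{\phi}_{r,\theta}^s(T(\omega))\,d\tau(\omega) = \int_0^1 \tau\bigl(\{T \le \min(rt^{-1/s},r^\theta)\}\bigr)\, dt.
\]

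Next, I would split this integral at $t_\ast = r^{(1-\theta)s}$, the value of $t$ at which $rt^{-1/s} = r^\theta$. On $(0,t_\ast]$ the minimum equals $r^\theta$ and the measure is a constant bounded, via \eqref{matanalog}, by $c\,\phi_{r^{\gamma\theta},\theta}^{m/\gamma,\,m/\gamma}(x-y)$; on $(t_\ast,1)$ the minimum is $rt^{-1/s}$, and I would perform the change of variables $v = rt^{-1/s}$, so that $v$ runs over $[r,r^\theta]$ and $dt = -s r^s v^{-s-1}\,dv$. After invoking \eqref{matanalog} with radius $v$, this gives the bound
\[
\int \wt{\phi}_{r,\theta}^s(T(\omega))\,d\tau(\omega) \;\le\; c\,r^{(1-\theta)s}\,\phi_{r^{\gamma\theta},\theta}^{m/\gamma,\,m/\gamma}(x-y) \;+\; c s r^s \int_r^{r^\theta} v^{-s-1}\,\phi_{v^\gamma,\theta}^{m/\gamma,\,m/\gamma}(x-y)\,dv.
\]
Since the kernel on the right collapses to $\min(1, v^m/|x-y|^{m/\gamma})$ when the outer and inner exponents agree, this is a completely explicit one-dimensional integral.

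The remaining work is case analysis on $|x-y|$ relative to the breakpoints $r^\gamma$ and $r^{\gamma\theta}$ of the target kernel $\phi_{r^\gamma,\theta}^{s/\gamma,\,m/\gamma}(x-y)$. In the inner case $|x-y|<r^\gamma$, the minimum equals $1$ throughout, the integral evaluates to $1-r^{s(1-\theta)}$, and the bound reduces to a constant matching the constant value of the target kernel. In the outer case $|x-y|\ge r^{\gamma\theta}$, the minimum always equals $v^m/|x-y|^{m/\gamma}$, so one integrates $v^{m-s-1}$ on $[r,r^\theta]$, producing $r^{\theta(m-s)+s}/|x-y|^{m/\gamma}$ up to a constant depending only on $s,m$, exactly matching the decay of the target kernel.

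The main obstacle — really the only place the bookkeeping gets delicate — is the intermediate case $r^\gamma \le |x-y| < r^{\gamma\theta}$, where one must split the $v$-integral at $w = |x-y|^{1/\gamma} \in [r,r^\theta]$ according to whether $v^m/|x-y|^{m/\gamma} \le 1$ or not. Both halves yield (after straightforward calculation with the hypothesis $s < m$, ensuring $\int v^{m-s-1}\,dv$ is well behaved) terms dominated by a constant multiple of $r^s|x-y|^{-s/\gamma} = (r^\gamma/|x-y|)^{s/\gamma}$; the constant-in-$t$ boundary contribution from the first part of the split is similarly controlled using $|x-y|^{-s/\gamma} \ge r^{-\theta s}$. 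Collecting constants across the three cases yields a single $C_{s,m}>0$ depending only on $c,s,m$, completing the proof. The edge case $s=0$ is trivial since $\wt{\phi}_{r,\theta}^0$ is the indicator of $[0,r^\theta]$ and the bound follows by applying \eqref{matanalog} once with radius $r^\theta$.
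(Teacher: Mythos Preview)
Your proof is correct and is essentially the same as the paper's. The paper quotes the identity
\[
\widetilde{\phi}_{r,\theta}^{s}(x) = s r^s \int_{r}^{r^\theta} 1_{[0,u]}(|x|)\,u^{-(s+1)}\,du + r^{s(1-\theta)}1_{[0,r^\theta]}(|x|)
\]
from \cite[Lemma~5.3]{bufafr:2019}, integrates over $\omega$, and applies Fubini; you instead derive the same one-parameter integral directly via the layer-cake formula and the substitution $v = r t^{-1/s}$, which lands on exactly the same expression
\[
c\,r^{(1-\theta)s}\,\phi_{r^{\gamma\theta}}^{m/\gamma}(x-y) + c\,s r^s \int_r^{r^\theta} v^{-s-1}\,\phi_{v^\gamma}^{m/\gamma}(x-y)\,dv.
\]
From there your three-case analysis on $|x-y|$ mirrors the paper's verbatim, including the split of the $v$-integral at $|x-y|^{1/\gamma}$ in the intermediate regime and the use of $s<m$ to control $\int v^{m-s-1}\,dv$. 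Your explicit treatment of the boundary case $s=0$ is a small addition the paper leaves implicit.
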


This allows us to obtain the desired almost-sure lower bound. 

\begin{theorem}\label{ctsfam}
Let $E \subset \R^n$ be compact, $\theta \in (0, 1]$, $\gamma \geq 1$ and $m \in \N$. If $\{f_\omega : E \rightarrow \R^m, \omega \in \Omega\}$ is a set of continuous $\sigma(\{F \times B : F \in \mathcal{F}, B \in \mathcal{B}\})$-measurable functions such that there exists $c > 0$ satisfying
 \begin{equation}\label{condcts}
 P(\{\omega : |f_\omega(x) - f_\omega(y)| \leq r\}) \leq c\phi_{r^\gamma, \theta}^{m/\gamma, m/\gamma}(x - y)
 \end{equation}
for all $x, y \in E$ and $r > 0$, then
 $$
 \lid f_\omega(E) \geq \gamma \lid^{m/\gamma} E
 $$
 and
 $$
  \uid f_\omega(E) \geq \gamma \uid^{m/\gamma} E
 $$
 for $P$-almost all $\omega \in \Omega$.
\end{theorem}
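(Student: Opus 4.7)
The plan is to push an equilibrium measure for $\phi_{r^\gamma,\theta}^{s/\gamma,m/\gamma}$ on $E$ forward by $f_\omega$, control the $\widetilde\phi_{r,\theta}^{s}$-energy of the image measure via Lemma \ref{hatlem}, and promote the resulting expectation bound to an almost-sure lower bound on covers of $f_\omega(E)$ through Borel--Cantelli and a Cauchy--Schwarz argument. I describe the $\lid$ case explicitly; the $\uid$ case is identical, except that every quantification over $r\to 0$ is restricted to a sufficiently sparse subsequence witnessing the $\limsup$ defining $\uid^{m/\gamma}E$. Fix rational $s<\gamma\lid^{m/\gamma}E$. The strict monotonicity of $s\mapsto\liminf_{r\to 0}\log C_{r,\theta}^{s,m/\gamma}(E)/(-\log r)-s$ recorded in the proof of Lemma \ref{existlem} gives some $\eta>0$ with
\[C_{r^\gamma,\theta}^{s/\gamma,\,m/\gamma}(E)\;\geq\;r^{-(s+\gamma\eta)}\]
for all sufficiently small $r>0$. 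Let $\mu_r$ denote the associated equilibrium measure and $\nu_{r,\omega}=f_\omega\mu_r$ its push-forward onto $f_\omega(E)$.

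Integrating the inequality of Lemma \ref{hatlem} against $d\mu_r(x)\,d\mu_r(y)$ and applying Fubini gives
\[\int_\Omega\int\!\!\int \widetilde\phi_{r,\theta}^{s}(u-v)\,d\nu_{r,\omega}(u)\,d\nu_{r,\omega}(v)\,d\tau(\omega)\;\leq\; C_{s,m}\,r^{s+\gamma\eta}.\]
At scales $r_k=2^{-k}$, Markov's inequality at threshold $r_k^{s+\gamma\eta/2}$ bounds the exceptional $\tau$-probability by a constant multiple of $r_k^{\gamma\eta/2}$, which is summable in $k$. Borel--Cantelli therefore produces, for $\tau$-a.e.\ $\omega$, a random $k_\omega$ past which
\[\int\!\!\int \widetilde\phi_{r_k,\theta}^{s}(u-v)\,d\nu_{r_k,\omega}(u)\,d\nu_{r_k,\omega}(v)\;\leq\; r_k^{s+\gamma\eta/2}.\]

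To convert this into $\lid f_\omega(E)\geq s$, take any cover $\{U_i\}$ of $f_\omega(E)$ with $r_k\leq|U_i|\leq r_k^\theta$. For pairs $u,v$ in a common $U_i$ we have $|u-v|\leq r_k^\theta$, so $\widetilde\phi_{r_k,\theta}^{s}(u-v)\geq(r_k/|U_i|)^s$; after passing to a disjoint partition refining $\{U_i\}$, the double energy dominates $\sum_i\nu_{r_k,\omega}(U_i)^2(r_k/|U_i|)^s$. Combining this with $\sum_i\nu_{r_k,\omega}(U_i)=1$ and the Cauchy--Schwarz inequality gives
\[\sum_i|U_i|^{s}\;\geq\;\Big(\sum_i\nu_{r_k,\omega}(U_i)^2/|U_i|^{s}\Big)^{-1}\;\geq\;r_k^{-\gamma\eta/2},\]
so that $S_{r_k,\theta}^{s}(f_\omega(E))\geq r_k^{-\gamma\eta/2}$ for every $k\geq k_\omega$. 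A routine comparison of $S_{r,\theta}^{s}$ between consecutive $r_k$ then extends the bound to all $r\to 0$, yielding $\lid f_\omega(E)>s$ on a $\tau$-conull event depending on $s$; intersecting over countably many rational $s$ approaching $\gamma\lid^{m/\gamma}E$ from below produces the claimed bound almost surely. I expect the main obstacle to be the final cover-to-energy bookkeeping: one must verify that the modified kernel's truncation at $r^\theta$ meshes with the upper diameter constraint $|U_i|\leq r_k^\theta$ cleanly enough for the Cauchy--Schwarz and scale-interpolation steps to preserve the polynomial gap $\gamma\eta/2$, which is precisely the feature motivating the introduction of $\widetilde\phi_{r,\theta}^{s}$.
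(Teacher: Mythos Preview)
Your proposal is correct and follows essentially the same route as the paper: push forward an equilibrium measure for $\phi_{r^\gamma,\theta}^{s/\gamma,m/\gamma}$, bound the expected $\widetilde\phi_{r,\theta}^{s}$-energy of the image via Lemma \ref{hatlem}, pass to an almost-sure statement along a geometric (or $\limsup$-witnessing) sequence by summability, and convert the energy bound into a lower bound on $S_{r,\theta}^{s}(f_\omega(E))$ via a Cauchy--Schwarz/mass-distribution argument (the paper invokes \cite[Lemma 5.4]{bufafr:2019} for this last step). The only cosmetic differences are that the paper uses Fubini on the summed series rather than Markov plus Borel--Cantelli, and closes via continuity in $s$ rather than a countable intersection over rationals.
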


Fractional Brownian motion is known to be $(\alpha - \varepsilon)$-H\"older and is shown to satisfy condition (\ref{condcts}) in Section \ref{frcsec}. Thus, a combination of Theorem \ref{holderthm} and Theorem \ref{ctsfam} yields our main result. 

\begin{theorem}\label{frcbrown}
Let $\theta \in (0,1]$, $m, n \in \N$, $B_\alpha : \R^n \rightarrow \R^m$ be index-$\alpha$ fractional Brownian motion $(0 < \alpha < 1)$ and $E \subset \R^n$ be compact. Then
$$
\lid B_\alpha(E) = \frac{1}{\alpha}\lid^{m\alpha} E
$$
and
$$
\uid B_\alpha(E) = \frac{1}{\alpha}\uid^{m\alpha} E
$$
almost surely.
\end{theorem}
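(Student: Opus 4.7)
The plan is to sandwich $\lid B_\alpha(E)$ and $\uid B_\alpha(E)$ between matching bounds supplied by Theorems \ref{holderthm} and \ref{ctsfam}. For the upper half, the Kolmogorov continuity theorem applied to the Gaussian increments of $B_\alpha$ yields, almost surely, a single (random) constant witnessing that $B_\alpha|_E$ is $(\alpha-\varepsilon)$-H\"older for every $\varepsilon\in(0,\alpha)$ simultaneously. Fixing such an $\omega$ and invoking Theorem \ref{holderthm} with exponent $\alpha-\varepsilon$ gives
$$
\lid B_\alpha(E) \leq \frac{1}{\alpha-\varepsilon}\lid^{m(\alpha-\varepsilon)}E,
$$
and similarly for $\uid$. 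Since $m \mapsto \phi_{r,\theta}^{s,m}(x)$ is non-increasing (on the only non-trivial branch $|x|\geq r^\theta$, differentiation gives $\theta\log r-\log|x|\leq 0$), the capacity $C_{r,\theta}^{s,m}(E)$ is non-decreasing in $m$, and hence so are $\lid^t E$ and $\uid^t E$ as functions of $t$. This monotonicity allows one to let $\varepsilon\downarrow 0$ and conclude the desired upper bound.

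For the lower bound I would verify hypothesis (\ref{condcts}) with $\gamma = 1/\alpha > 1$. The increment $B_\alpha(x)-B_\alpha(y)$ is a centred Gaussian vector in $\R^m$ with independent components of variance $|x-y|^{2\alpha}$, so integrating its density over the ball of radius $r$ yields the elementary bound
$$
\mathbb{P}\bigl(|B_\alpha(x)-B_\alpha(y)|\leq r\bigr) \leq c\min\bigl\{1,\,(r/|x-y|^\alpha)^{m}\bigr\}
$$
with $c$ depending only on $m$. The decisive observation is that the choice $\gamma=1/\alpha$ makes $s=m/\gamma=m\alpha$ coincide with the codomain exponent in $\phi_{r^\gamma,\theta}^{m/\gamma,m/\gamma}$, so the middle branch of definition (\ref{ker}) collapses and
$$
\phi_{r^{1/\alpha},\theta}^{m\alpha,m\alpha}(x-y)=\begin{cases}1 & |x-y|<r^{1/\alpha}\\ (r/|x-y|^\alpha)^m & |x-y|\geq r^{1/\alpha}\end{cases}
$$
matches the Gaussian estimate up to the constant $c$. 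Theorem \ref{ctsfam} then delivers $\lid B_\alpha(E)\geq \frac{1}{\alpha}\lid^{m\alpha}E$ and its $\uid$ counterpart almost surely, closing the sandwich.

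The principal obstacle is the $\varepsilon\downarrow 0$ passage in the upper bound, because continuity of the profiles in $t$ is advertised as a consequence of this very theorem and cannot be invoked here. Monotonicity alone suffices, however, since only a one-sided left limit is required: for any non-decreasing function one has $\lim_{t\uparrow m\alpha}\lid^t E\leq \lid^{m\alpha}E$, and this combines with $\frac{1}{\alpha-\varepsilon}\to\frac{1}{\alpha}$ to close the upper bound. A minor bookkeeping point is that the H\"older constant produced by the continuity theorem is random, but Theorem \ref{holderthm} depends only on its existence, not its value, so this causes no difficulty.
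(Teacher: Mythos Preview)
Your proposal is correct and follows essentially the same route as the paper: the upper bound comes from the almost-sure $(\alpha-\varepsilon)$-H\"older continuity of $B_\alpha$ combined with Theorem~\ref{holderthm}, the lower bound from verifying (\ref{condcts}) with $\gamma=1/\alpha$ via the Gaussian increment estimate and applying Theorem~\ref{ctsfam}, and the $\varepsilon\downarrow 0$ passage is handled by monotonicity of the profiles in $t$. Your explicit remark that only monotonicity (not continuity) of $t\mapsto\lid^t E$ is needed here is well taken, since Corollary~\ref{cont} is indeed downstream of this theorem; the paper makes the same move, simply citing that the profiles are monotonically increasing.
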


Future work may take inspiration from \cite{wuxiao} and pursue uniform dimension results in this context for the intermediate dimensions. This could take the form of proving covering lemmas analogous to \cite[Lemma 3.2]{wuxiao} and \cite[Lemma 3.3]{wuxiao}.
\subsection{Observations and Applications}
Here we present a few applications of Theorems \ref{holderthm}, \ref{ctsfam} and \ref{frcbrown}, the proofs of which may be found in Section \ref{proofcorsec}.\\

Recent literature has sought to identify situations in which the intermediate dimensions are continuous at $\theta = 0$, for example, see \cite{bufafr:2019,fafrke:2018}. Theorem \ref{holderthm} implies that this continuity is preserved under index-$\alpha$ fractional Brownian motion. 
\begin{cor}\label{contcor}
Let $E \subset \R^n$ be bounded and $B_\alpha: \R^n \rightarrow \R^m$ denote index-$\alpha$ fractional Brownian motion with $m\alpha \leq n$. If $\lid E$ is continuous at $\theta = 0$, then $\lid B_\alpha (E)$ is almost surely continuous at $\theta = 0$. The analogous result holds for upper dimensions.
\end{cor}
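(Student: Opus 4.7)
The plan is to sandwich $\lim_{\theta \to 0^+} \lid_\theta B_\alpha(E)$ between $\hd B_\alpha(E)$, from below by monotonicity, and $\min\{m,\, \hd E / \alpha\}$, from above via Theorem \ref{holderthm} and the H\"older regularity of $B_\alpha$, and then collapse the sandwich using Kahane's classical identity $\hd B_\alpha(E) = \min\{m,\, \hd E / \alpha\}$ almost surely \cite{kahane:book}. Since $B_\alpha$ is almost surely continuous and both the intermediate and Hausdorff dimensions are invariant under closure for bounded sets, I may assume without loss of generality that $E$ is compact so that Theorem \ref{holderthm} applies directly.

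For the upper bound, Kolmogorov's continuity theorem provides, for each rational $\beta \in (0, \alpha)$, a full-probability event on which $B_\alpha$ is $\beta$-H\"older on $E$; intersecting over countably many rationals still preserves full probability. On this event, Theorem \ref{holderthm} gives, for every $\theta \in (0, 1)$,
\begin{equation*}
\lid_\theta B_\alpha(E) \;\leq\; \frac{1}{\beta} \lid_\theta^{m\beta} E \;\leq\; \frac{1}{\beta} \lid_\theta E,
\end{equation*}
where the second inequality combines monotonicity of the profile $\lid_\theta^m$ in its second argument $m$ with the identification $\lid_\theta^n E = \lid_\theta E$. Letting $\theta \to 0^+$ and invoking the hypothesis, then sending $\beta \to \alpha^-$, and combining with the trivial bound $\lid_\theta \leq m$, yields almost surely
\begin{equation*}
\limsup_{\theta \to 0^+} \lid_\theta B_\alpha(E) \;\leq\; \min\Big\{m,\, \frac{1}{\alpha}\hd E\Big\}.
\end{equation*}

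The lower bound is immediate from the monotonicity of $\theta \mapsto \lid_\theta$ together with the identity $\lid_0 = \hd$: for every $\theta > 0$, $\lid_\theta B_\alpha(E) \geq \hd B_\alpha(E)$. Combined with the upper bound and Kahane's formula, this forces $\lim_{\theta \to 0^+} \lid_\theta B_\alpha(E) = \hd B_\alpha(E)$ almost surely, which is precisely the continuity asserted in the corollary. The upper-intermediate case follows by running the identical argument with $\uid$ replacing $\lid$ throughout, since Theorem \ref{holderthm}, the monotonicity, and Kahane's formula all transfer verbatim. Beyond the bookkeeping of the countable union of null sets indexed by rational $\beta$, I expect the main subtlety to be the profile comparison $\lid_\theta^{m\beta} E \leq \lid_\theta E$ uniformly in $\theta$; this should follow from direct inspection of the kernel $\phi^{s,m}_{r,\theta}$, which is pointwise decreasing in $m$ on $\{|x| \geq r^\theta\}$ and hence produces larger capacities, and correspondingly larger profile dimensions, as $m$ increases up to the ambient dimension $n$.
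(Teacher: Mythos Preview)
Your proof is correct and follows the same sandwich strategy as the paper: bound $\lid B_\alpha(E)$ below by $\hd B_\alpha(E)$ and above by a quantity controlled by $\lid E$, then collapse using Kahane's formula and the continuity hypothesis. The only substantive difference is that the paper obtains the upper bound by invoking Theorem~\ref{frcbrown} directly, which gives the exact identity $\lid B_\alpha(E) = \frac{1}{\alpha}\lid^{m\alpha} E$ almost surely, and then uses profile monotonicity $\lid^{m\alpha} E \leq \lid^{n} E = \lid E$; this avoids your auxiliary limit $\beta \to \alpha^-$ over rational H\"older exponents. Your route through Theorem~\ref{holderthm} alone is slightly more elementary in that it does not rely on Theorem~\ref{ctsfam} or Theorem~\ref{frcbrown}, and you are also more careful than the paper in explicitly carrying the $\min\{m,\cdot\}$ through Kahane's formula, which matters when $\hd E > m\alpha$.
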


Furthermore, Theorem \ref{holderthm} together with Corollary \ref{contcor} have a surprising application to the box and Hausdorff dimensions of sets with continuity at $\theta = 0$. In the following, we use the notation 
$$
\lbd^{n\alpha} E = \lidone^{n\alpha} E, 
$$
since our profiles extend the box dimension profiles $\lbd^m$ of Falconer \cite{fal:2019} to non-integer values of $m$ when $\theta = 1$ (and similarly for the upper dimensions). 

\begin{cor}\label{surprise}
Let $E \subset \R^n$ be a bounded set such that $\lid E$ is continuous at $\theta = 0$. If $\alpha > \frac{1}{n}\hd E$, then
$$
\frac{1}{\alpha}\lbd^{n\alpha} E < n.
$$
On the other hand, if $\alpha \leq \frac{1}{n}\hd E$, then
$$
\frac{1}{\alpha}\lbd^{n\alpha} E = n.
$$
The analogous result holds for upper dimensions.
\end{cor}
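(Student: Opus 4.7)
The plan is to reformulate the conclusion using Theorem \ref{frcbrown} at $m = n$ and $\theta = 1$, which gives $\tfrac{1}{\alpha}\lbd^{n\alpha} E = \lbd B_\alpha(E)$ almost surely. The corollary then reduces to two almost-sure statements about $\lbd B_\alpha(E)$, and in both cases I would bring in Kahane's classical formula $\hd B_\alpha(E) = \min\{n, \tfrac{1}{\alpha}\hd E\}$ almost surely (see \cite{kahane:book}).

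For the second case ($\alpha \le \tfrac{1}{n}\hd E$), Kahane yields $\hd B_\alpha(E) = n$, and since $B_\alpha(E) \subset \R^n$ forces $\lbd B_\alpha(E) \le n$, the sandwich $\hd \le \lbd$ immediately gives $\lbd B_\alpha(E) = n$ and hence $\tfrac{1}{\alpha}\lbd^{n\alpha} E = n$ almost surely.

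For the first case ($\alpha > \tfrac{1}{n}\hd E$), the key observation is that the kernel \eqref{ker} collapses at $s = m$: all three pieces merge into $\phi_{r,\theta}^{m,m}(x) = \min\{1, (r/|x|)^m\}$, which is independent of $\theta \in (0,1]$. Therefore the capacity $C_{r,\theta}^{m,m}(E)$ is $\theta$-invariant, so whether the profile attains its maximum, i.e., whether $\lid^m E = m$ or $\lid^m E < m$, is the same for every $\theta \in (0,1]$. To deploy this, I would use continuity of $\lid E$ at $\theta = 0$ together with $\hd E < n\alpha$ to conclude $\lid E < n\alpha$ for $\theta$ sufficiently close to $0$; combined with the monotonicity $\lid^{n\alpha} E \le \lid^{n} E = \lid E$ (the profile is non-decreasing in $m$, and $\lid^n E = \lid E$ as established in \cite{bufafr:2019}), this yields $\lid^{n\alpha} E < n\alpha$ at such small $\theta$, and the $\theta$-invariance just noted propagates the strict inequality to $\theta = 1$, giving $\tfrac{1}{\alpha}\lbd^{n\alpha} E < n$. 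The analogous argument handles $\uid$. The main obstacle is spotting the $\theta$-invariance of $C_{r,\theta}^{m,m}$, since the three-piece kernel looks $\theta$-dependent at first glance; once this collapse at $s = m$ is verified, the rest is a short chain of standard inequalities.
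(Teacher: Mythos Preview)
Your argument is correct. For the second case ($\alpha \le \tfrac{1}{n}\hd E$) your route via Theorem~\ref{frcbrown} and Kahane is essentially what the paper has in mind (the paper in fact leaves this direction implicit and only writes out the first case).

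For the first case, however, your approach is genuinely different from the paper's. The paper argues on the \emph{image} side: it assumes $\lbd B_\alpha(E)=n$, invokes \cite[Corollary~6.3]{bufafr:2019} to force $\lid B_\alpha(E)=n$ for all $\theta\in(0,1]$, uses Corollary~\ref{contcor} to push this down to $\theta=0$, and obtains $\hd B_\alpha(E)=n$, contradicting Kahane's formula $\hd B_\alpha(E)=\tfrac{1}{\alpha}\hd E<n$. You instead work directly on the \emph{domain} side with the profiles themselves: the collapse $\phi_{r,\theta}^{m,m}(x)=\min\{1,(r/|x|)^m\}$ (which the paper does note, in the proof of Lemma~\ref{hatlem}) makes $C_{r,\theta}^{m,m}$ independent of $\theta$, so the condition ``$\lid^{n\alpha}E=n\alpha$'' is $\theta$-invariant; combining this with $\lid^{n\alpha}E\le\lid^{n}E=\lid E$ and continuity of $\lid E$ at $0$ gives the strict inequality at $\theta=1$ without ever passing through Brownian motion, Kahane, or Corollary~\ref{contcor}. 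Your route is more elementary and self-contained for this particular statement; the paper's route, on the other hand, deliberately illustrates the philosophy advertised in the introduction, that information about profiles can be read off from well-chosen fractional Brownian images, and reuses Corollary~\ref{contcor} rather than reproving the $\theta$-invariance. A minor remark: your opening sentence says you will reformulate via Theorem~\ref{frcbrown}, but your first-case argument never actually uses that reformulation, so you may want to adjust the framing.
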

In particular, since $\hd E \leq \lbd E$, the first part of Corollary \ref{surprise} shows us that $\lbd^{n\alpha} E$ is strictly less than the trivial upper bound of $n\alpha$ implied by Lemma \ref{existlem} for
$$
\alpha \in \left(\frac{\hd E}{n}, \frac{\lbd E}{n}\right),
$$
and similarly for $\ubd E$. Furthermore, Corollary \ref{surprise} may immediately be translated into the context of fractional Brownian motion by Theorem \ref{frcbrown}. 
\begin{cor}\label{surprisebrownian}
Let $E \subset \R^n$ be a bounded set such that $\lid E$ is continuous at $\theta = 0$ and $B_\alpha : \R^n \rightarrow \R^n$ denote index-$\alpha$ Brownian motion. If $\alpha > \frac{1}{n}\hd E$, then
$$
\lbd B_\alpha(E) < n.
$$
almost surely. On the other hand, if $\alpha \leq \frac{1}{n}\hd E$, then
$$
\lbd B_\alpha(E) = n.
$$
almost surely. The analogous result holds for upper dimensions.
\end{cor}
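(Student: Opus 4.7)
The plan is to obtain this dichotomy essentially as a transparent corollary of Theorem \ref{frcbrown} (specialised to $\theta=1$) combined with Corollary \ref{surprise}. Since the hypotheses on $E$ (boundedness plus continuity of $\lid E$ at $\theta=0$) are exactly those required by Corollary \ref{surprise}, the only real task is to translate the statement of Corollary \ref{surprise} from the language of dimension profiles into the language of box dimensions of fractional Brownian images.

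First, I would take $\theta = 1$ and $m = n$ in Theorem \ref{frcbrown}. Since the intermediate dimensions reduce to the ordinary lower and upper box dimensions at $\theta = 1$, and since the notation $\lbd^{n\alpha} E = \lidone^{n\alpha} E$ was set up for precisely this reason, the theorem gives
$$
\lbd B_\alpha(E) = \frac{1}{\alpha}\lbd^{n\alpha} E
$$
almost surely, with the analogous identity for $\ubd$. These equalities hold on a set of full $\tau$-measure that may depend on $\alpha$, but this is harmless because the statement of the corollary is itself for fixed $\alpha$.

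Next, I would directly invoke Corollary \ref{surprise}, whose hypothesis (continuity of $\lid E$ at $\theta = 0$) is exactly what we are given. When $\alpha > \frac{1}{n}\hd E$, Corollary \ref{surprise} gives $\frac{1}{\alpha}\lbd^{n\alpha} E < n$, and substituting the a.s.\ identity above yields $\lbd B_\alpha(E) < n$ almost surely. When $\alpha \leq \frac{1}{n}\hd E$, Corollary \ref{surprise} gives $\frac{1}{\alpha}\lbd^{n\alpha} E = n$, so the same substitution yields $\lbd B_\alpha(E) = n$ almost surely. The argument for $\ubd$ is identical, using the upper-dimension conclusions of both Theorem \ref{frcbrown} and Corollary \ref{surprise}.

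In short, there is essentially no new obstacle here: the real content has already been loaded into Theorem \ref{frcbrown} (the a.s.\ formula for fractional Brownian images in terms of profiles) and Corollary \ref{surprise} (the dichotomy for the profile $\lbd^{n\alpha} E$ under continuity at $\theta=0$). The only thing to verify carefully is the $\theta = 1$ specialisation, and that $m=n$ is the correct choice so that the exponent in $\lbd^{n\alpha}$ matches the statement. Once these are in place, the two conclusions of Corollary \ref{surprisebrownian} follow by direct substitution.
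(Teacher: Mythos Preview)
Your proposal is correct and matches the paper's own approach exactly: the paper simply remarks that Corollary \ref{surprise} ``may immediately be translated into the context of fractional Brownian motion by Theorem \ref{frcbrown},'' which is precisely the $\theta=1$, $m=n$ specialisation followed by substitution that you describe.
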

It may be of interest to see how Corollary \ref{surprisebrownian}, which deals with box dimension, differs from the related classical result of Kahane on the Hausdorff dimensions of Brownian images \cite[Corollary, pp. 267]{kahane:book}.\\

A further implication of Theorem \ref{frcbrown} is that an inequality derived from a slight modification of the proof allows us to show in Section \ref{contproof} that the dimension profiles are continuous for any Borel set $E \subseteq \R^n$. 

\begin{cor}\label{cont}
Let $E \subseteq \R^n$ be bounded and $\theta \in (0,1]$. The functions $f, g : (0, n) \rightarrow [0, n]$ defined by
$$
f(t) = \lid^t E
$$
and
$$
g(t) \rightarrow \uid^t E
$$
are continuous in $t$. 
\end{cor}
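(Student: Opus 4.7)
The plan is to deduce continuity on $(0,n]$ from two monotonicity properties of the profile: (i) $\lid^t E$ is non-decreasing in $t$, and (ii) $\lid^t E/t$ is non-increasing in $t$. Granting both, for any $t_0\in(0,n]$ the bounds
\[
\frac{t}{t_0}\,\lid^{t_0}E \;\leq\; \lid^t E \;\leq\; \lid^{t_0}E \qquad (t<t_0)
\]
(left from (ii), right from (i)) together with the symmetric bounds $\lid^{t_0}E\leq \lid^t E\leq (t/t_0)\lid^{t_0}E$ for $t>t_0$ (applicable when $t_0<n$) both collapse to $\lid^{t_0}E$ as $t\to t_0$, yielding continuity. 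The argument for $\uid^t E$ is identical.

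Property (i) is immediate from \eqref{ker}: on $|x|\geq r^\theta$, $\phi_{r,\theta}^{s,m}(x)=r^{s(1-\theta)}(r^\theta/|x|)^m$ is non-increasing in $m$ (since $r^\theta/|x|\leq 1$), while the inner branches do not depend on $m$. Hence $C_{r,\theta}^{s,m}(E)$ is non-decreasing in $m$, and the strict $s$-monotonicity recorded in \cite[Lemma~3.2]{bufafr:2019} forces the unique root $\lid^m E$ to be non-decreasing in $m$ as well.

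Property (ii) is the substantive input, and as the remark preceding the corollary indicates, I would extract it from a slight modification of the proof of Theorem~\ref{frcbrown}. For index-$\alpha$ fractional Brownian motion with $0<\alpha<1$, Theorem~\ref{ctsfam} applied with $\gamma=1/\alpha$ (using the kernel estimate for $B_\alpha$ established in Section~\ref{frcsec}) gives $\lid B_\alpha(E)\geq \alpha^{-1}\lid^{m\alpha}E$ almost surely, while Theorem~\ref{holderthm} combined with the $(\alpha-\epsilon)$-H\"older regularity of $B_\alpha$ yields $\lid B_\alpha(E)\leq(\alpha-\epsilon)^{-1}\lid^{m(\alpha-\epsilon)}E$ almost surely, for every $\epsilon\in(0,\alpha)$. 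Chaining and writing $t=m\alpha$, $t'=m(\alpha-\epsilon)$ gives $\lid^t E/t\leq \lid^{t'}E/t'$ for $t'<t<n$; taking $m=n$ covers all such pairs.

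The main obstacle I foresee is extending (ii) to the right endpoint $t=n$, since the fBm route requires $\alpha<1$ and therefore only produces the inequality on the open interval $(0,n)$. To close this gap I would exploit the scaling identity $\phi_{r,\theta}^{\lambda s,\lambda m}(x)=\bigl(\phi_{r,\theta}^{s,m}(x)\bigr)^{\lambda}$ visible branch-by-branch from \eqref{ker}; combined with Jensen's inequality on the product of equilibrium measures for $\lambda\geq 1$, this gives $C_{r,\theta}^{\lambda s,\lambda m}(E)\leq C_{r,\theta}^{s,m}(E)^\lambda$, and choosing $s=\lid^m E$ and invoking the strict $s$-monotonicity of \cite[Lemma~3.2]{bufafr:2019} once more yields $\lid^{\lambda m}E\leq \lambda\,\lid^m E$ for all $m>0$ and $\lambda\geq 1$, establishing (ii) on all of $(0,n]$.
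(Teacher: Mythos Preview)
Your argument is correct, and in fact your Jensen step alone already gives property~(ii) on all of $(0,n]$, so the fractional Brownian detour for the open interval is redundant once you have it. The scaling identity $\phi_{r,\theta}^{\lambda s,\lambda m}=(\phi_{r,\theta}^{s,m})^\lambda$ combined with Jensen for the convex map $u\mapsto u^\lambda$ (applied to the product equilibrium measure) yields $C_{r,\theta}^{\lambda s,\lambda m}(E)\le C_{r,\theta}^{s,m}(E)^\lambda$ for $\lambda\ge 1$, and feeding this into the strict $s$-monotonicity of \cite[Lemma~3.2]{bufafr:2019} gives $\lid^{\lambda m}E\le\lambda\,\lid^{m}E$ directly, with no restriction $\lambda m<n$. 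Your sandwich $(t/t_0)\lid^{t_0}E\le\lid^t E\le\lid^{t_0}E$ (and its mirror for $t>t_0$) then delivers continuity.

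This is a genuinely different route from the paper's. The paper fixes $s=m\alpha$ and re-runs the kernel estimate for $B_\alpha$ with a perturbed exponent $\gamma=(1-\varepsilon)/(\alpha+\varepsilon)$ in place of $1/\alpha$; applying Theorems~\ref{holderthm} and~\ref{ctsfam} with this $\gamma$ produces the two-sided squeeze $\tfrac{1-\varepsilon}{\alpha+\varepsilon}\lid^{m(\alpha+\varepsilon)/(1-\varepsilon)}E\le\tfrac{1}{\alpha}\lid^{s}E\le\tfrac{1}{\alpha-\varepsilon}\lid^{m(\alpha-\varepsilon)}E$, from which continuity at $s$ follows after one more appeal to monotonicity. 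That argument is faithful to the paper's stated philosophy of reading off deterministic profile information from fractional Brownian images, but it is less elementary than your Jensen observation and, as you noticed, runs into trouble at the endpoint $t=n$ because one cannot realise $m\alpha=n$ with $m\le n$ and $\alpha<1$. Your approach avoids this issue entirely and never needs $B_\alpha$.
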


One final application concerns the Hausdorff dimension of the set of exceptional sets in the projection setting. The proof is based on an application of Theorem \ref{ctsfam}, which allows the proof of \cite[Theorem 1.2 (ii), (iii)]{fal:2019} to be generalised from box dimension (the case where $\theta = 1$) to all intermediate dimensions. 

\begin{theorem}\label{excep}
Let $E \subset \R^n$ be compact, $m \in \{1, \dots, n\}$ and $0 \leq \lambda \leq m$, then 
\begin{equation}
\hd \{V \in G(n, m) : \uid \pi_V E < \uid^\lambda E\} \leq m(n-m) - (m - \lambda)
\end{equation}
and
\begin{equation}
\hd \{V \in G(n, m) : \lid \pi_V E < \lid^\lambda E\} \leq m(n-m) - (m - \lambda)
\end{equation}
\end{theorem}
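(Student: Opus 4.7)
I would argue by contradiction, combining Theorem \ref{ctsfam} with the continuity of profiles from Corollary \ref{cont}. Set $F = \{V \in G(n,m) : \uid \pi_V E < \uid^\lambda E\}$ and suppose, contrary to the claim, that $\hd F > m(n-m) - (m - \lambda)$. Pick $\eta$ with $m(n-m) - (m-\lambda) < \eta < \hd F$, and use Frostman's lemma to obtain a Borel probability measure $\nu$ supported on $F$ satisfying $\nu(B(V,\rho)) \leq c \rho^\eta$ for every ball $B(V,\rho) \subseteq G(n,m)$.

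The key geometric input is the classical Grassmannian estimate (see \cite[Lemma 3.11]{mat:book}): with respect to the natural Haar measure on $G(n,m)$, the set $\{V \in G(n,m) : |\pi_V(x-y)| \leq r\}$ has measure at most $c \min(1, (r/|x-y|)^m)$, since it is the $(r/|x-y|)$-neighbourhood of the codimension-$m$ submanifold $\{V : (x-y) \in V^\perp\} \cong G(n-1,m)$. Covering this tubular set by $\sim (r/|x-y|)^{-(m(n-m)-m)}$ Grassmannian balls of radius $r/|x-y|$ and summing the Frostman estimate yields
$$\nu\bigl(\{V : |\pi_V(x-y)| \leq r\}\bigr) \leq c' \min\bigl(1, (r/|x-y|)^{\lambda'}\bigr) = c'\, \phi_{r,\theta}^{\lambda',\lambda'}(x-y),$$
where $\lambda' := \eta - m(n-m) + m > \lambda$. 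This is precisely the hypothesis (\ref{condcts}) of Theorem \ref{ctsfam} taken with scalar $\gamma = 1$ and with $\lambda'$ playing the role of the target-space dimension.

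Applying Theorem \ref{ctsfam} to the family $\{\pi_V\}$ with the measure $\nu$ yields $\uid \pi_V E \geq \uid^{\lambda'} E$ for $\nu$-almost every $V$. Letting $\eta \downarrow m(n-m) - (m-\lambda)$, so that $\lambda' \downarrow \lambda$, and invoking Corollary \ref{cont} gives $\uid \pi_V E \geq \uid^\lambda E$ for $\nu$-almost every $V$, contradicting $\nu(F) = 1$. The lower-dimension inequality follows identically after replacing $\uid$ by $\lid$ throughout.

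The main obstacle is reconciling the scalar $\gamma$: projections are Lipschitz, so the natural exponent is $\gamma = 1$, whereas Theorem \ref{ctsfam} is stated for $\gamma > 1$. One must therefore either check that the proof of Theorem \ref{ctsfam} extends to $\gamma = 1$ under a uniform Lipschitz hypothesis (mirroring the $\theta = 1$ argument in \cite{fal:2019}), or re-run the underlying capacity computation directly for the projection family. The boundary cases $\lambda \in \{0, m\}$ are then either trivial or handled by a direct Frostman argument.
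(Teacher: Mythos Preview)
Your approach---Frostman measure on the exceptional set, Grassmannian tube estimate, then Theorem~\ref{ctsfam} with $\gamma=1$---is exactly the paper's; the only difference is that the paper applies Frostman directly at the threshold exponent $m(n-m)-(m-\lambda)$ (citing \cite[(5.12)]{mat:book2} for the tube bound), obtaining $\lambda'=\lambda$ immediately and so avoiding your limiting step, which is in any case unnecessary since monotonicity of the profiles already gives $\uid^{\lambda'}E \geq \uid^{\lambda}E$ for any single choice of $\eta$. Your caveat about $\gamma=1$ versus $\gamma>1$ is well spotted: the paper simply invokes Theorem~\ref{ctsfam} with $\gamma=1$ without comment, implicitly relying on the proof going through unchanged in that case.
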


Recall that $\uid^\lambda E$ and $\lid^\lambda E$ decrease as $\lambda$ decreases. Thus, Theorem \ref{excep} tells us that the there is a stricter upper bound on the dimension of the exceptional set the larger the drop in dimension from the expected value. We conclude by posing a slightly different question which is a slight strengthening of Theorem \ref{excep}, an analogy of which  was considered in \cite[Theorem 1.3 (ii), (iii)]{fal:2019}.
\begin{question}
Let $0 \leq \gamma \leq n - m$. What are the optimum upper bounds for 
$$
\hd \{V \in G(n, m) : \uid \pi_V E < \uid^{m+\gamma} E - \gamma\} 
$$
and
$$
\hd \{V \in G(n, m) : \lid \pi_V E < \lid^{m + \gamma}  E- \gamma \}?
$$
\end{question}
The method in \cite{fal:2018} for box dimensions relied on Fourier transforms and approximating the potential kernels by a Gaussian with a strictly positive Fourier transform. However, the natural family of kernels appropriate for working with intermediate dimension have a more complex shape, which complicates matters. A significantly different, but perhaps interesting, approach may be required.
\section{Proof of Theorem \ref{holderthm}}
To prove Theorem \ref{holderthm} we use the following result \cite[Lemma 4.4]{bufafr:2019}, which is stated here for convenience. 
\begin{lemma}\label{capacityub}
Let $E\subset \R^m$ be compact, $0 \leq s \leq m$ and  $\theta \in (0, 1]$. If there exists a measure $\mu \in \mathcal{M}(E)$ and $\beta > 0$ such that
\begin{equation}\label{cond}
\int \phi_{r, \theta}^{s, m}(x- y) d\mu(y) \geq \beta
\end{equation}
for all $x \in E$, then there is a number $r_0>0$ such that for all $0<r\leq r_0$,
$$ S_{r,\theta}^s(E) \leq a_m\ceil{\log_2(|E|/r)+1}\frac{r^{s}}{\beta}$$
where the constant $a_m$ depends only on $m$. In particular, 
$$ S_{r,\theta}^s(E) \leq a_m\ceil{\log_2(|E|/r)+1} C_{r,\theta}^{s, m}(E)r^s.$$
\end{lemma}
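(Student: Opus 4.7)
The plan is to convert the lower bound $\int \phi_{r,\theta}^{s,n}(x-y)\,d\mu(y) \geq \gamma$ into a geometric cover of $E$ by balls at controlled dyadic scales via pigeonholing and the classical $5r$-covering lemma, then sub-partition those balls to meet the admissible-diameter window $[r, r^\theta]$ prescribed by the definition of $S_{r,\theta}^s$.

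Set $t_k := 2^k r$ for $k = 0, 1, \ldots, K$ with $K := \ceil{\log_2(|E|/r)}$, so that $t_K \geq |E|$ and $\mu(B(x, t_{K+1})) = 1$ for all $x \in E$. Since $\phi_{r,\theta}^{s,n}$ is radially non-increasing,
$$\gamma \leq \int \phi_{r,\theta}^{s,n}(x-y)\,d\mu(y) \leq \sum_{k=0}^{K} \phi_{r,\theta}^{s,n}(t_k)\,\mu\bigl(B(x,t_{k+1})\bigr),$$
and pigeonholing over the $K+1$ terms produces, for each $x \in E$, an index $k^\ast(x)$ with
$$\mu\bigl(B(x, t_{k^\ast(x)+1})\bigr) \geq \frac{\gamma}{(K+1)\,\phi_{r,\theta}^{s,n}(t_{k^\ast(x)})}.$$
Apply the $5r$-covering lemma to $\{B(x, t_{k^\ast(x)+1}) : x \in E\}$ to extract a pairwise disjoint subfamily $\{B(x_i,\rho_i)\}$ whose five-fold enlargements still cover $E$. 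Disjointness and $\mu(E) \leq 1$ then yield
$$\sum_i \frac{1}{\phi_{r,\theta}^{s,n}(\rho_i/2)} \leq \frac{K+1}{\gamma}.$$

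Partition each $B(x_i, 5\rho_i)$ into sets of diameter in $[r, r^\theta]$: keep it as a single element of diameter $10\rho_i$ when $\rho_i \leq r^\theta/10$, or split it into at most $C_n(\rho_i/r^\theta)^n$ pieces of diameter $r^\theta$ when $\rho_i > r^\theta/10$. A direct check against the three-regime formula for $\phi_{r,\theta}^{s,n}$ shows that in every case the $s$-cost contributed by the $i$-th ball satisfies
$$\sum_{\text{pieces of } B(x_i, 5\rho_i)} |U|^s \leq a_n\,\frac{r^s}{\phi_{r,\theta}^{s,n}(\rho_i/2)}.$$
Summing over $i$ and combining with the previous display produces $S_{r,\theta}^s(E) \leq a_n\,\ceil{\log_2(|E|/r) + 1}\,r^s/\gamma$, valid once $r \leq r_0 := |E|$ so that the logarithm is non-negative and the chosen diameters are admissible. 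The ``in particular'' clause follows by specialising $\mu$ to an equilibrium measure for $\phi_{r,\theta}^{s,n}$ on $E$ (supplied by \cite[Lemma 3.1]{bufafr:2019}), which delivers $\int \phi_{r,\theta}^{s,n}(x-y)\,d\mu(y) \geq 1/C_{r,\theta}^{s,n}(E)$ on $E$ after handling the capacity-null exceptional set via a standard limiting argument.

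The principal technical hurdle is the case analysis in the partitioning step: the two regimes $\rho_i \leq r^\theta/10$ and $\rho_i > r^\theta/10$ contribute to $\sum |U|^s$ via different mechanisms---one ball of diameter $\approx \rho_i$ versus $\approx (\rho_i/r^\theta)^n$ pieces of diameter $r^\theta$---yet both must reduce to the uniform bound $r^s/\phi_{r,\theta}^{s,n}(\rho_i/2)$ so that the measure-theoretic inequality combines cleanly. This alignment is precisely what the piecewise kernel has been engineered for: the interior exponent $s$ matches the $s$-cost of covering small balls, while the exterior exponent $n$ on the polynomial tail matches the $n$-volume cost of sub-partitioning large balls into $r^\theta$-sized sub-cubes.
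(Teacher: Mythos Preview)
The paper does not prove this lemma: it is quoted verbatim as \cite[Lemma 4.4]{bufafr:2019} and stated ``for convenience'' only, so there is no proof in the present paper to compare your argument against.

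That said, your argument is correct and follows the standard route one would expect for deducing covering estimates from a uniform potential lower bound: dyadic decomposition of the kernel integral, pigeonholing a good scale $k^\ast(x)$ at each point, the Vitali $5r$-covering lemma to pass to a disjoint subfamily, and then sub-partitioning the enlarged balls so that every covering set has diameter in $[r,r^\theta]$. The case analysis you describe in the final paragraph does go through; the piecewise exponents in $\phi_{r,\theta}^{s,n}$ (namely $s$ on $[r,r^\theta]$ and $n$ on $(r^\theta,\infty)$) are precisely what make both branches collapse to $a_n\,r^s/\phi_{r,\theta}^{s,n}(\rho_i/2)$ for a constant depending only on $n$. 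One small simplification: by \cite[Lemma 3.1]{bufafr:2019}, the equilibrium measure satisfies $\int\phi_{r,\theta}^{s,n}(x-y)\,d\mu(y)\geq 1/C_{r,\theta}^{s,n}(E)$ for \emph{every} $x\in E$, not merely almost every $x$, so no limiting argument over an exceptional set is needed for the ``in particular'' clause.
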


Intermediate dimension is invariant under scaling and thus we may assume the H\"older constant $c$ in (\ref{holderlabel}) equals one. First, note
$$
\frac{r^s}{|x - y|^{\alpha s}} \leq \frac{r^{\theta(m - s) + s}}{|x - y|^{\alpha m}}
$$
for $|x - y| \leq r^{\theta /\alpha}$ and $0 \leq s \leq m$. It then follows from the definition of $\phi_{r, \theta}^{s, m}$ that
\begin{align*}
\phi_{r, \theta}^{s, m}(f(x) - f(y))& = \min\left\{1, \frac{r^s}{|f(x) - f(y)|^s}, \frac{r^{\theta(m - s) + s}}{|f(x) - f(y)|^m}\right\}\\
&\geq \min\left\{1, \frac{r^s}{|x - y|^{\alpha s}}, \frac{r^{\theta(m - s) + s}}{|x - y|^{\alpha m}}\right\}\\
&=  \begin{cases}
1 & |x-y| < r^{1/\alpha}\\
\left({r^{1/\alpha}}/{|x-y|}\right)^{s\alpha} &r^{1/\alpha} \leq |x-y| \leq r^{\theta/\alpha}\\
(r^{1/\alpha})^{\theta(m\alpha -s\alpha) + s\alpha}/\left(|x-y|\right)^{m\alpha} & |x-y| > r^{\theta/\alpha}
\end{cases}\\
&= \phi_{r^{1/\alpha}, \theta}^{s\alpha, m\alpha}(x- y).
\end{align*}
By Lemma \ref{attaincap}, for each $0 \leq s \leq m$ there exists a measure $\mu \in \mathcal{M}(E)$ such that for all $x \in E$
\begin{align*}
\frac{1}{C_{r^{1/\alpha}, \theta}^{s\alpha, m\alpha}(E)} &\leq \int \phi_{r^{1/\alpha}, \theta}^{s\alpha, m\alpha}(x- y) d\mu(y) \\
&\leq \int \phi_{r, \theta}^{s, m}(f(x) - f(y)) d\mu(y) \\
& \leq \int \phi_{r, \theta}^{s, m}(f(x) - w)d(f\mu)(w)
\end{align*}
where $f\mu \in \mathcal{M}(E)$ is defined by $\int g(w)d(f\mu)(w) = \int g(f(x))d\mu(x)$ for all continuous functions $g$ and by extension. This verifies that $f(E)$ supports a measure satisfying the condition of Lemma \ref{capacityub}. Hence, for sufficiently small $r > 0$, 
$$
S_{r, \theta}^{s}(f(E)) \leq a_m \ceil{\log_2 (|E|/r)+1}r^{s} C_{r^{1/\alpha}, \theta}^{s\alpha, m\alpha}(E)
$$
for all $0 \leq s \leq m$. This implies
$$
\liminf\limits_{r \rightarrow 0}\frac{S_{r, \theta}^s(f(E))}{-\log r} \leq -s+ \liminf\limits_{r \rightarrow 0}\frac{C_{r^{1/\alpha}, \theta}^{s\alpha, m\alpha}(E)}{-\alpha\log r^{1/\alpha}},
$$
and so 
$$
\alpha\liminf\limits_{r \rightarrow 0}\frac{S_{r, \theta}^s(f(E))}{-\log r} \leq -s\alpha+ \liminf\limits_{r \rightarrow 0}\frac{C_{r^{1/\alpha}, \theta}^{s\alpha, m\alpha}(E)}{-\log r^{1/\alpha}}.
$$
Recall,
$$
\frac{1}{\alpha}\lid^{m\alpha} E \leq \frac{1}{\alpha}m\alpha = m.
$$
and thus we may set $s\alpha = \lid^{m\alpha} E$. It follows
$$
\liminf\limits_{r \rightarrow 0}\frac{S_{r, \theta}^{\frac{1}{\alpha} \lid^{m\alpha} E}(f(E))}{-\log r} \leq 0,
$$
implying
$$\lid f(E) \leq \frac{1}{\alpha} \lid^{m\alpha} E.$$
The inequality for $\uid f(E)$ follows by using a similar argument and taking upper limits. $\square$

\section{Proof of Lemma \ref{hatlem} and Theorem \ref{ctsfam}}

\subsection{Proof of Lemma \ref{hatlem}}
Let $\theta \in (0,1]$. To ease notation, define
$$
\phi_{r^\gamma}^{m/\gamma}(x-y) := \phi_{r^\gamma, \theta}^{m/\gamma, m/\gamma}(x - y) = 
\begin{cases} 
      1 & |x - y| < r^\gamma \\
       \left(\frac{r^\gamma}{|x-y|}\right)^{m/\gamma}  & |x-y| \geq r^\gamma
   \end{cases},
$$
since the kernels $\phi_{r, \theta}^{s, t}$ lose dependence on $\theta$ and take the same form on $[r,r^\theta]$ and $(r^\theta, \infty)$ when $s = t$.\\

Recall, from \cite[Lemma 5.3]{bufafr:2019}, that
$$
\widetilde{\phi}_{r, \theta}^{s}(x) = sr^s\int\limits_{u = r}^{r^\theta} 1_{[0, u]}(|x|)u^{-(s+1)}du + r^{s(1-\theta)}1_{[0, r^\theta]}(|x|),
$$
and so by Fubini's theorem
\begin{align*} 
\int \widetilde{\phi}_{r, \theta}^s(f_\omega(x) - f_\omega(y)) dP(\omega) &=
 sr^s \int\limits_{u = r}^{r^\theta} u^{-(s+1)} \left[\int 1_{[0, u]}(|f_\omega(x) - f_\omega(y)|)dP(\omega)\right]\,du\\ &\,\,\,\,\,\,+ r^{s(1-\theta)}\int 1_{[0, r^\theta]}(|f_\omega(x) - f_\omega(y)|)dP(\omega).
\end{align*}
From (\ref{matanalog}), 
\begin{equation}
\int 1_{[0, u]}(|f_\omega(x) - f_\omega(y)|) dP(\omega) \leq c \phi_{u^\gamma}^{m/\gamma}(x - y)
\end{equation}
and
\begin{equation}
\int 1_{[0, r^\theta]}(|f_\omega(x) - f_\omega(y)|) dP(\omega) \leq c \phi_{r^{\theta\gamma}}^{m/\gamma}(x - y).
\end{equation}
Hence
$$
\frac{1}{c}\int \widetilde{\phi}_{r, \theta}^s(f_\omega(x) - f_\omega(y)) dP(\omega) \leq sr^s \int\limits_{u = r}^{r^\theta} u^{-(s+1)}\phi_{u^\gamma}^{m/\gamma}(x - y)\,du + r^{s(1-\theta)}\phi_{r^{\theta\gamma}}^{m/\gamma}(x - y),
$$
which must be evaluated in three cases.\\
\vspace{0.6cm}\\

\textbf{Case 1: } Suppose $|x - y| \leq r^\gamma$, then
$$
\phi_{u^\gamma}^{m/\gamma}(x - y) = 1
$$
for all $r \leq u \leq r^\theta$, and
$$
\phi_{r^{\theta\gamma}}^{m/\gamma}(x -y) = 1.
$$
Hence
\begin{align*}
\frac{1}{c}\int \widetilde{\phi}_{r, \theta}^s(f_\omega(x) - f_\omega(y)) dP(\omega) &\leq sr^s \int\limits_{u = r}^{r^\theta} u^{-(s+1)}\phi_{u^\gamma}^{m/\gamma}(x - y)\,du + r^{s(1-\theta)}\phi_{r^{\theta\gamma}}^{m/\gamma}(x - y)\\
&= sr^s \int\limits_{u = r}^{r^\theta} u^{-(s+1)}\,du + r^{s(1-\theta)}\\
&= 1.
\end{align*}

\textbf{Case 2:} Suppose $r^\gamma \leq |x - y| \leq r^{\theta \gamma}$, then
$$
\phi_{r^{\theta\gamma}}^{m/\gamma}(x -y) = 1.
$$
Moreover, for $r \leq u \leq |x - y|^{1/\gamma}$ we have
$$
\phi_{u^\gamma}^{m/\gamma}(x-y) = \frac{u^{m}}{|x - y|^{m/\gamma}}
$$
and 
$$
\phi_{u^\gamma}^{m/\gamma}(x-y) = 1
$$
for $|x-y|^{1/\gamma} \leq u \leq r^\theta$. Hence
\begin{align*}
&\frac{1}{c}\int \widetilde{\phi}_{r, \theta}^s(f_\omega(x) - f_\omega(y)) dP(\omega) \\
&\leq sr^s \int\limits_{u = r}^{r^\theta} u^{-(s+1)}\phi_{u^\gamma}^{m/\gamma}(x - y)\,du + r^{s(1-\theta)}\phi_{r^{\theta\gamma}}^{m/\gamma}(x - y)\\
&= sr^s \int\limits_{u = r}^{r^\theta} u^{-(s+1)}\phi_{u^\gamma}^{m/\gamma}(x - y)\,du + r^{s(1-\theta)}\\
&= sr^s \int\limits_{u = r}^{|x - y|^{1/\gamma}}u^{-(s+1)}\frac{u^{m}}{|x - y|^{m/\gamma}}\,du
+ sr^s \int\limits_{u = |x - y|^{1/\gamma}}^{r^\theta} u^{-(s + 1)}\,du + r^{s(1-\theta)}\\
&\leq \left(\frac{s}{m - s} + 1\right)\left(\frac{r^\gamma}{|x-y|}\right)^{s/\gamma}.
\end{align*}
\vspace{0.6cm}\\

\textbf{Case 3:} Suppose $|x - y| \geq r^{\theta \gamma}$, then
$$
\phi_{r^{\theta\gamma}}^{m/\gamma}(x -y) = \frac{r^{\theta m}}{|x- y|^{m/\gamma}}
$$
and
$$
\phi_{u^\gamma}^{m/\gamma}(x-y) = \frac{u^{m}}{|x - y|^{m/\gamma}}
$$
for $r \leq u \leq r^\theta$. \\

Hence
\begin{align*}
&\frac{1}{c}\int \widetilde{\phi}_{r, \theta}^s(f_\omega(x) - f_\omega(y)) dP(\omega) \\
&\leq sr^s \int\limits_{u = r}^{r^\theta} u^{-(s+1)}\phi_{u^\gamma}^{m/\gamma}(x - y)\,du + r^{s(1-\theta)}\phi_{r^{\theta\gamma}}^{m/\gamma}(x - y)\\
&= sr^s \int\limits_{u = r}^{r^\theta} u^{-(s+1)} \frac{u^{m}}{|x - y|^{m/\gamma}}\,du + r^{s(1-\theta)}\frac{r^{\theta m}}{|x- y|^{m/\gamma}}\\
&= \left(\frac{s}{m - s} + 1\right)\frac{(r^{\gamma})^{\theta(m/\gamma - s/\gamma) + s/\gamma}}{|x- y|^{m/\gamma}}.\\
\end{align*}
To conclude, we deduce from Case 1, Case 2 and Case 3 that
\begin{align*}
&\frac{1}{c}\int \widetilde{\phi}_{r, \theta}^s(f_\omega(x) - f_\omega(y)) dP(\omega) \\
&\leq \begin{cases} 
      1 & |x - y| < r^\gamma \\
       \left(\frac{s}{m - s} + 1\right)\left(\frac{r^\gamma}{|x-y|}\right)^{s/\gamma}  & r^\gamma\leq |x - y| \leq r^{\gamma\theta} \\
      \left(\frac{s}{m - s} + 1\right)\frac{(r^{\gamma})^{\theta(m/\gamma - s/\gamma) + s/\gamma}}{|x- y|^{m/\gamma}} & r^{\gamma\theta} < |x - y| 
   \end{cases} \\
  & \leq \left(\frac{s}{m-s} + 1\right) \phi_{r^\gamma, \theta}^{s/\gamma, m/\gamma}(x -y),
\end{align*}
as required. $\square$
\subsection{Proof of Theorem \ref{ctsfam}}
Let $E \subset \R^n$ be compact, $\theta \in (0, 1]$, $\gamma \geq 1$, $m \in \N$ and $0 \leq s < m$. Choose a sequence $(r_k)_{k \in \N}$ such that $0 < r_k < 2^{-k}$ and 
\begin{equation}\label{supest}
\limsup\limits_{k \rightarrow \infty} \frac{C_{r_k^\gamma, \theta}^{s, m}(E)}{-\log r_k^\gamma} = \limsup\limits_{r \rightarrow 0} \frac{C_{r, \theta}^{s, m}(E)}{-\log r}.
\end{equation}
Moreover, define a sequence of constants $\beta_k$ by
$$
\beta_k := \frac{1}{C_{r_k^\gamma, \theta}^{s/\gamma, m/\gamma}(E)} = \int\int \phi_{r_k^\gamma, \theta}^{s/\gamma, m/\gamma}(x- y) d\mu^k(x)\mu^k(y),
$$
where $\mu^k$ is the equilibrium measure from Lemma \ref{attaincap} on $E$ associated with the kernel $\phi_{r_k^\gamma, \theta}^{s/\gamma, m/\gamma}$. \\

Hence, by (\ref{condcts}) and Lemma \ref{hatlem} we have
\begin{align*}
&\int\int\int \widetilde{\phi}_{r_k, \theta}^{s}(f_\omega (x) - f_\omega(y)) dP(\omega) d\mu^k(x)d\mu^k(y)\\
&\leq C_{s, m}\int\int \phi_{r_k^\gamma, \theta}^{s/\gamma, m/\gamma}(x- y)d\mu^k(x)d\mu^k(y) \\
&\leq C_{s, m} \beta_k.
\end{align*}
Then, for each $\varepsilon > 0$,
$$
\int\int\int \beta_k^{-1}r_k^{\varepsilon}\widetilde{\phi}_{r_k, \theta}^{s}(f_\omega (x) - f_\omega(y)) dP(\omega) d\mu^k(x)d\mu^k(y) \leq C_{s, m} r_k^{\varepsilon}
$$
from which Fubini's theorem implies
$$
\int \sum\limits_{k =1}^{\infty} \left(\int\int\beta_k^{-1}r_k^{\varepsilon}\widetilde{\phi}_{r_k, \theta}^{s}(f_\omega (x) - f_\omega(y))  d\mu^k(x)d\mu^k(y) \right)dP(\omega) \leq C_{s, m} \sum\limits_{k =1}^{\infty}r_k^{\varepsilon} < \infty
$$
since $|r_k^{\varepsilon}| \leq 2^{-k\varepsilon}$. Hence, for $P$-almost all $\omega \in \Omega$, there exists $M_\omega > 0$ such that
$$
\int\int\beta_k^{-1}r_k^{\varepsilon}\widetilde{\phi}_{r_k, \theta}^{s}(t - u)  d\mu^k_\omega(t)d\mu^k_\omega(u) \leq M_\omega < \infty
$$
for all $k$, where $\mu^k_\omega$ is the image of $\mu^k$ under $f_\omega$. Thus,
$$
\int\int\widetilde{\phi}_{r_k, \theta}^{s}(t - u)  d\mu^k_\omega(t)d\mu^k_\omega(u) \leq M_\omega\beta_k r_k^{-\varepsilon} 
$$
for all $k$. Hence, for each $k$ there exists a set $F_k \subset f_\omega(E)$ with $\mu^k_\omega(F_k) \geq 1/2$ and
$$
\int \widetilde{\phi}_{r^k, \theta}^{s}(t- u)d\mu^k_\omega(t) \leq 2M_\omega \beta_k r_k^{-\varepsilon}
$$
for all $u \in F_k$. Hence, by \cite[Lemma 5.4]{bufafr:2019}
$$
S_{r_k, \theta}^{s}(f_\omega(E)) \geq \frac{1}{2}(2M_\omega\beta_k)^{-1}r_k^{s + \varepsilon} = (4M_\omega\beta_k)^{-1}r_k^{s+ \varepsilon},
$$
and so
\begin{align*}
\uplim\limits_{k \rightarrow \infty}\frac{\log S_{r_k, \theta}^{s}(f_\omega(E))}{-\log r_k} &\geq \uplim\limits_{k \rightarrow \infty}\frac{\log r_k^{s + \varepsilon}(4M_\omega\beta_k)^{-1}}{-\log r_k}\\
&= \uplim\limits_{k \rightarrow \infty}\frac{\log r_k^{s + \varepsilon}C_{r_k^\gamma, \theta}^{s/\gamma, m/\gamma}(E)}{-\log r_k}\\
&= -(s + \epsilon) + \uplim\limits_{k \rightarrow \infty} \frac{\log C_{r_k^\gamma, \theta}^{s/\gamma, m/\gamma}(E)}{-\log r_k}.
\end{align*}
Hence
\begin{align*}
\frac{1}{\gamma}\uplim\limits_{k \rightarrow \infty}\frac{\log S_{r_k, \theta}^{s}(f_\omega(E))}{-\log r_k} 
&\geq -\frac{s + \varepsilon}{\gamma}+ \uplim\limits_{k \rightarrow \infty} \frac{\log C_{r_k^\gamma, \theta}^{s/\gamma, m/\gamma}(E)}{-\log r_k^\gamma}.
\end{align*}
This is true for all $\epsilon >0$, so using (\ref{supest}),
$$
\frac{1}{\gamma}\uplim\limits_{r\to 0}\frac{\log S_{r, \theta}^{s}(f_\omega(E))}{-\log r} 
\geq
-\frac{s}{\gamma}  + \uplim\limits_{r\to 0} \frac{\log C_{r, \theta}^{s/\gamma, m/\gamma}(E)}{-\log r}
$$
 for all $s \in [0,m)$. Since the expressions on both sides of this inequality are continuous for $s \in [0,m]$  by \cite[Lemma 2.1]{bufafr:2019} and \cite[Lemma 3.2]{bufafr:2019}, the inequality is valid for $s \in [0,m]$ and consequently $s/\gamma \in [0, m/\gamma]$. Hence, for $s/\gamma = \uid^{m/\gamma} E$
 $$
\uplim\limits_{r\to 0}\frac{\log S_{r, \theta}^{s}(f_\omega(E))}{-\log r} 
\geq
0,
$$
implying $\uid f_\omega(E) \geq s = \gamma \uid^{m/\gamma} E$. The argument for $\lid f_\omega E$ is similar, although it suffices to set $r_k = 2^{-k}$. $\square$

\section{Proof of Theorem \ref{frcbrown}}\label{frcsec}
Let $\theta \in (0, 1]$ and $0 < \varepsilon < \alpha < 1$. By \cite[Corollary 2.11]{fal:2018} there exists, almost surely, $M > 0$ such that
\begin{equation}\label{holdcont}
|B_\alpha(x) - B_\alpha(y)| \leq M|x- y|^{\alpha - \varepsilon}
\end{equation}
for all $x, y \in E$. In addition,
\begin{align}\label{probcalc}
\mathbb{P}(|B_\alpha(x)-B_\alpha(y)| \leq r) &\leq \mathbb{P}(|B_{\alpha, i}(x)- B_{\alpha, i}(y)| \leq r \textnormal{ for all $1 \leq i \leq m$})\nonumber\\
&\leq \left(\frac{1}{\sqrt{2\pi}}\frac{1}{|x-y|^\alpha}\int\limits_{|t| \leq r} \exp\left(\frac{-t^2}{2|x-y|^{2\alpha}}\right)\,dt\right)^m\nonumber\\
&\leq \left(\frac{1}{|x-y|^\alpha}\int\limits_{|t| \leq r} 1\,dt\right)^m\nonumber\\
&= 2^m \left(\frac{r^{1/\alpha}}{|x-y|}\right)^{m\alpha}\\
&\leq 2^{\max\{m, n\}}\phi_{r^{\gamma}, \theta}^{m/\gamma , m/\gamma} (x -y)\nonumber
\end{align}
for all $x, y\in E$ and $r > 0$, where $\gamma = 1/\alpha$. \\

By applying Theorem \ref{ctsfam} and Theorem \ref{holderthm},
$$
\frac{1}{\alpha}\lid^{m\alpha} E \leq \lid B_\alpha(E) \leq \frac{1}{\alpha - \varepsilon}\lid^{m(\alpha-\varepsilon)}E \leq \frac{1}{\alpha - \varepsilon}\lid^{m\alpha}E
$$
and 
$$
\frac{1}{\alpha}\uid^{m\alpha} E \leq \uid B_\alpha(E) \leq \frac{1}{\alpha - \varepsilon}\uid^{m(\alpha-\varepsilon)}E \leq \frac{1}{\alpha - \varepsilon}\uid^{m\alpha}E
$$
almost surely, with the last inequality in each case holding since the profiles are monotonically increasing. Letting $\varepsilon \rightarrow 0$, the result follows. $\square$
\section{Proof of Corollaries \ref{contcor}, \ref{surprise} and \ref{cont}}\label{proofcorsec}
\subsection{Proof of Corollary \ref{contcor}}
From \cite[Corollary, pp. 267]{kahane:book}, 
$$
\hd B_\alpha(E) = \frac{1}{\alpha} \hd E
$$
almost surely, and so
$$
\hd E \leq \alpha \lid B_\alpha(E) \leq \alpha\frac{1}{\alpha}\lid^{m\alpha} E \leq \lid^{n} E = \lid E
$$
by monotonicity of the profiles. Hence, as $\theta \rightarrow 0$, continuity of $\lid B_\alpha(E)$ at $\theta = 0$ is established, since $\lid E \rightarrow \hd E$ by definition. The proof for upper dimensions is similar.$\square$

\subsection{Proof of Corollary \ref{surprise}}
Let $E \subset \R^n$ be such that $\lid E$ is continuous at $\theta = 0$, and let $B_\alpha : \R^n \rightarrow \R^n$ denote index-$\alpha$ fractional Brownian motion where
$$
\alpha > \frac{\hd E}{n}.
$$
Hence, by \cite[Corollary, pp. 267]{kahane:book}, 
\begin{equation}\label{contradict}
\hd B_\alpha(E) = \frac{1}{\alpha} \hd E < n
\end{equation}
almost surely. Then, in order to reach a contradiction, let us suppose that $\lbd^{n\alpha} E = n\alpha$. This implies $\lbd B_\alpha(E) = n$ almost surely by Theorem \ref{frcbrown}. Then, by \cite[Corollary 6.3]{bufafr:2019},
$$
\lid B_\alpha(E) = n
$$
almost surely for all $\theta \in (0, 1]$. By Corollary \ref{contcor}, $\lid B_\alpha(E)$ is continuous at $\theta = 0$ which implies $\hd B_\alpha(E) = n$, a contradiction to (\ref{contradict}). The case for $\alpha \leq \frac{1}{n}\hd E$ follows easily from \cite[Corollary, pp. 267]{kahane:book} and Theorem \ref{frcbrown}. $\square$

\subsection{Proof of Corollary \ref{cont}}\label{contproof}
Let $0 < s < n$ and $\theta \in (0, 1]$.  Fix $\alpha > 0$ such that $n\alpha = s$. Since $E$ is bounded, there exists $B > 1$ such that
\begin{equation*}
|x-y| < B
\end{equation*}
for all $x, y \in E$. Let $\varepsilon >0$ be such that $n(\alpha + \varepsilon)/(1 - \varepsilon) < n$, and choose $C_\varepsilon \geq B^{\varepsilon(1 + \alpha)/(1-\varepsilon)}$. Observe
\begin{align}\label{xybound}
C_\varepsilon &\geq |x -y|^{\varepsilon(1 + \alpha)/(1-\varepsilon)} \nonumber\\
& = \frac{|x-y|^{(\alpha+\varepsilon)/(1-\varepsilon)}}{|x-y|^{\alpha}}
\end{align}
for all $x, y \in E$. Then, consider $B_\alpha : \R^n \rightarrow \R^n$. By (\ref{probcalc}) and (\ref{xybound}),
\begin{align}
\mathbb{P}(|B_\alpha(x)-B_\alpha(y)| \leq r) &\leq 2^n \min\left\{1, \left(\frac{r^{1/\alpha}}{|x-y|}\right)^{n\alpha}\right\}\nonumber\\
&\leq 2^nC_\varepsilon^n\min\left\{1, \left(\frac{r^{(1-\varepsilon)/(\alpha + \varepsilon)}}{|x-y|}\right)^{n\frac{\alpha + \varepsilon}{1-\varepsilon}}\right\}\nonumber\nonumber\\
&= 2^nC_\varepsilon^n\phi_{r^{\gamma}, \theta}^{n/\gamma , n/\gamma} (x -y)\nonumber
\end{align}
for all $x, y\in E$ and $r > 0$, where $\gamma = (1-\varepsilon)/(\alpha + \varepsilon)$. Hence, from Theorem \ref{holderthm} and Theorem \ref{ctsfam}, we have

\begin{equation*}
\frac{1 - \varepsilon}{\alpha + \varepsilon} \lid^{n({\alpha + \varepsilon})/({1 - \varepsilon})} E \leq \lid B_\alpha(E) \leq \frac{1}{\alpha - \varepsilon}\lid^{n(\alpha - \varepsilon)}E
\end{equation*}
almost surely. The profiles are monotonically increasing, and so
\begin{equation*}
\frac{1 - \varepsilon}{\alpha + \varepsilon} \lid^{s} E \leq \frac{1 - \varepsilon}{\alpha + \varepsilon} \lid^{n({\alpha + \varepsilon})/({1 - \varepsilon})} E \leq \frac{1}{\alpha}\lid^s E \leq \frac{1}{\alpha - \varepsilon}\lid^{n(\alpha - \varepsilon)}E \leq \frac{1}{\alpha - \varepsilon} \lid^{s} E
\end{equation*}
almost surely, since 
$$
\frac{n(\alpha + \varepsilon)}{1-\varepsilon} > s > n(\alpha - \varepsilon).
$$
This holds for arbitrary sequences of sufficiently small positive $\varepsilon$ tending to zero and so establishes continuity from above and below. The proof for $\uid^s$ is similar. $\square$
\section{Proof of Theorem \ref{excep}}
First, define 
$$
A = \{ V \in G(n, m) : \uid \pi_V E < \uid^\lambda E\}
$$
and suppose, with the aim of deriving a contradiction, that
$$
\hd A > m(n - m) - (m - \lambda).
$$

By Frostman's lemma, there exists a measure $\mu$ supported on a compact set $B \subseteq A$ and $c > 0$ such that
$$
\mu(B_G(V, r)) \leq cr^{m(n-m) - (m - \lambda)}
$$
for all $V \in G(n, m)$ and $r > 0$, where $B_G$ is a ball defined via the natural metric of dimension $m(n - m)$ on $G(n, m)$. Hence, using \cite[Inequality (5.12)]{mat:book2} yields
\begin{align*}
\mu(\{V \in G(n, m) : |\pi_V x - \pi_V y | < r\}) &\leq \left(\frac{r}{|x - y|}\right)^{m(n - m) - (m - \lambda) - m(n-m-1)}\\
&= \left(\frac{r}{|x - y|}\right)^{\lambda} \\
&\leq \phi_{r, \theta}^{\lambda, \lambda}(x -y ).
\end{align*}
Thus, the condition of Theorem \ref{ctsfam} is satisfied with $\Omega = G(n, m)$, $P = \mu$ and $\gamma = m/\lambda$. Hence
\begin{equation}\label{above}
\uid \pi_V E \geq \uid^{\lambda} E 
\end{equation}
for $\mu$ almost-all $V \in G(n, m)$. Since $\mu$ is supported on $A$, this is a contradiction, as it implies the existence of $V \in A$ satisfying (\ref{above}). The proof for $\lid$ follows similarly. $\square$
\section{Acknowledgement}
The author thanks the Carnegie Trust and London Mathematical Society for financially supporting this work, as well as Kenneth Falconer and Jonathan Fraser for insightful discussion and comments.


\begin{thebibliography}{99}

\bibitem{bufafr:2019}
{S.A. Burrell, K.J. Falconer \and J.M. Fraser}, Projection theorems for intermediate dimensions, {\em Journal of Fractal Geometry}, 8 (2):95-116 (2021).

\bibitem{fal:1982}
{K.J. Falconer}, Hausdorff dimension and the exceptional set of projections, {\em Mathematika}, 29:109-115 (1982).

\bibitem{Falconer}
{K.J. Falconer}, Fractal Geometry: Mathematical Foundations and Applications, {\em John Wiley \& Sons}, (1990).

\bibitem{fal:2018}
{K.J. Falconer}, A capacity approach to box and packing dimensions of projections and other images, {\em preprint}, available at: \href{https://arxiv.org/abs/1711.05316}{arXiv:1711.05316}

\bibitem{fal:2019}
{K.J. Falconer}, A capacity approach to box and packing dimensions of projections of sets and exceptional directions, {\em Journal of Fractal Geometry (to appear)}, \href{https://arxiv.org/abs/1901.11014}{arXiv:1901.11014}

\bibitem{fafrke:2018}
{K.J. Falconer, J.M. Fraser \and T. Kempton}, Intermediate dimensions, {\em Math. Zeit. (to appear)}, \href{https://arxiv.org/abs/1811.06493}{arXiv:1811.06493}

\bibitem{faho:1997}
{K.J. Falconer \and J.D. Howroyd}, Projection theorems for box and packing dimensions, {\em Math. Proc. Cambridge Philos. Soc.}, 119:269-286 (1997).

\bibitem{spirals:2019}
{J.M. Fraser}, On H\"older solutions to the spiral winding problem, {\em preprint}, \href{https://arxiv.org/abs/1905.075631811.06493}{arXiv:1905.07563}

\bibitem{brown-graph}
{J.M. Fraser \and T. Sahlsten}, On the Fourier analytic structure of the Brownian graph, {\em Analysis \& PDE}, 11:115-132 (2018).

\bibitem{frtr:2018}
{J.M. Fraser \and S. Troscheit}, The Assouad spectrum of random self-affine carpets, {\em preprint}, \href{https://arxiv.org/abs/1805.04643}{arXiv:1805.04643}

\bibitem{assouad-spec}
{J.M. Fraser \and H. Yu}, New dimension spectra: finer information on scaling and homogeneity, {\em Advances in Mathematics}, 329:273-328 (2018).

\bibitem{kahane:book}
{J.P. Kahane}, Some Random Series of Functions, Cambridge University Press, Cambridge (1985).

\bibitem{kaufmanproj}
{R. Kaufman}, On the Hausdorff dimension of projections, {\em Mathematika}, 15:153-155 (1968).

\bibitem{mattila1975b}
{P. Mattila}, Hausdorff dimension, orthogonal projections and intersections with planes, {\em Ann. Acad. Sci. Fennicae}, 1:227-244 (1975).

\bibitem{mat:book}
{P. Mattila}, Geometry of Sets and Measures in Euclidean Spaces, {\em Cambridge Studies in Advanced Mathematics}, Cambridge University Press, (1995).

\bibitem{shiehxiao}
{N.R. Shieh \and Y. Xiao}, Hausdorff and packing dimensions of the images of random fields, {\em Bernoulli}, 16:88-97 (2010).

\bibitem{wuxiao}
{D. Wu and Y. Xiao}, Uniform dimension results for Gaussian random fields, {\em Sci. China Ser. A}, 52:1478-1496 (2009).

\bibitem{xiao}
{Y. Xiao}, Packing dimension of the image of fractional Brownian motion, {\em Statist. Probab. Lett.}, 33:379-387 (1997).

\bibitem{mat:book2}
{P. Mattila}, Fourier Analysis and Hausdorff Dimension, Cambridge University Press, Cambridge, (2015).
\end{thebibliography}
\end{document}